\newtheorem{theorem}{Theorem}[section]
\newtheorem{lemma}[theorem]{Lemma}
\numberwithin{equation}{section}
\newtheorem{ex}{Example}[section]
\begin{document}
\pagenumbering{arabic}
\baselineskip=1.2pc
\begin{center}
{{\bf \large Stability analysis and error estimates of local discontinuous Galerkin method for convection-diffusion equations on overlapping mesh with non-periodic boundary conditions}\footnote{Supported by NSF grant DMS-1818467 and KMITL Research Fund, Research Seed Grant for New Lecturer.}}
\end{center}
\centerline{Nattaporn Chuenjarern\footnote{Department of Mathematics, Faculty of Science, King Mongkut's Institute of Technology Ladkrabang, Bangkok, Thailand 10520. E-mail: nattaporn.ch@kmitl.ac.th}\quad Kanognudge Wuttanachamsri\footnote{Department of Mathematics, Faculty of Science, King Mongkut's Institute of Technology Ladkrabang, Bangkok, Thailand 10520. E-mail: kanognudge.wu@kmitl.ac.th} \quad
Yang Yang\footnote{Department of Mathematical Sciences,
Michigan Technological University, Houghton, MI 49931. E-mail:
yyang7@mtu.edu}}
\bigskip
\centerline{\bf Abstract}
A new local discontinuous Galerkin (LDG) method for convection-diffusion equations on overlapping meshes with periodic boundary conditions was introduced in \cite{Overlap1}. With the new method, the primary variable $u$ and the auxiliary variable $p=u_x$ are solved on different meshes. In this paper, we will extend the idea to convection-diffusion equations with non-periodic boundary conditions, i.e. Neumann and Dirichlet boundary conditions. The main difference is to adjust the boundary cells. Moreover, we study the stability and suboptimal error estimates. Finally, numerical experiments are given to verify the theoretical findings.
\\

\textbf{Keywords}: Local discontinuous Galerkin method, Stability, Error analysis,  Overlapping meshes

\pagenumbering{arabic}

\bigskip

\section{Introduction}\label{sec:introduction}

In this paper, we apply the local discontinuous Galerkin (LDG) method on overlapping meshes provided in \cite{Overlap1} for the convection-diffusion equations
\begin{equation} \label{heat}
u_t+f(u)_x=(a^2(u)u_x)_x,\:\:\:\:x\in[0,1],\:\:t>0,
\end{equation}
as well as its two dimensional version. We assume that $a(u)\geq0$.

In 1973, Reed and Hill first introduced the discontinuous Galerkin (DG) method in the framework of neutron linear transportation \cite{1stDG}. This method gained even greater popularity for good stability, high-order accuracy, and flexibility on h-p adaptivity and complex geometry. Subsequently, Cockburn et. al. proposed in a series of papers \cite{Cockburn1, Cockburn2, Cockburn3, Cockburn4} the Runge-Kutta discontinuous Galerkin (RKDG) methods for hyperbolic conservation laws. Later, in \cite{Cockburn5}, Cockburn and Shu introduced the LDG method for convection-diffusion equations motivated by successful solving compressible Navier-Stokes equations in \cite{Bassi}.


As in traditional LDG method, we introduce an auxiliary variable $p=A(u)_x$ with $A(u)=\int^u a(s)\ ds$ to represent the derivative of the primary variable $u$, and rewrite \eqref{heat} into the following system of first order equations
\begin{align}\label{heatorder0}
\begin{cases}
u_t+f(u)_x=(a(u)p)_x,\\
p=A(u)_x.
\end{cases}
\end{align}
Then we can solve $u$ and $p$ on the same mesh by using the DG method. The LDG method shares all the nice features of the DG methods for hyperbolic equations, and it becomes one of the most popular numerical methods for solving convection-diffusion equations. However, due to the discontinuity nature of the numerical approximations, it may not be easy to construct and analyze the scheme for some specials convection-diffusion equations. For example, the convection terms of chemotaxis model \cite{Chemo1, Chemo2} and miscible displacements in porous media \cite{Mixmethod1, Mixmethod2} are products of one of the primary variable and the derivative of another one. Therefore, the upwind flux for the convection term may not be easy to obtain. One of the alternatives is to use other methods, such as mixed finite element method, to obtain continuous approximations of the derivatives, see e.g. \cite{Guo2014}. A more general idea is to use the Lax-Friedrichs flux, see e.g. \cite{Guo2017,Li2017,Yu2017} for the error estimates for miscible displacements and chemotaxis models. The main technique is to use the diffusion term to control the convection term \cite{Wang2015,Wang2016,Wang2016b}. Moreover, to make the numerical solutions to be physically relevant, we have to add a sufficiently large penalty which depends on the numerical approximations of the derivatives of the primary variables \cite{Guo2017b,Li2017,Chuenjarern2019}. Another possible way is to construct flux-free schemes, such as the central discontinuous Galerkin (CDG) method \cite{CDG} and the staggered discontinuous Galerkin (SDG) method \cite{SDG}. However, the CDG scheme doubles the computational cost as we have to solve each equation in \eqref{heatorder0} on both the primary and dual meshes twice and it is not easy to apply limiters in SDG method because it requires partial continuity of the numerical approximations.

Recently, one of the authors in this paper introduced a new LDG method on overlapping meshes \cite{Overlap1} by solving $u$ and $p$ on primitive and dual meshes, respectively, hence $p$ is continuous across the interfaces on the primitive mesh. The scheme is proved to be stable under the $L^2$-norm and can be used to construct third-order maximum-principle-preserving schemes \cite{Du2019}. However, in some special cases, it may not enjoy the optimal convergence rates. The suboptimal convergence rate can be observed numerically if all the following three conditions are satisfied: (1) Odd order polynomials are used in the finite element space, (2) The dual mesh generated by connecting the midpoints of the primitive mesh, (3) No penalty is added to the numerical scheme. If one of the conditions is violated, the convergence rate will turn out to be optimal. Later, in \cite{FouriernewLDG}, we used Fourier analysis to explicitly write out the error between the numerical and exact solutions and verify the optimal convergence rate for linear parabolic equations with periodic boundary conditions in one space dimension. Moreover, we also found out some superconvergence points that may depend on the perturbation constant in the construction of the dual mesh.

Both works given above are for problems with periodic boundary conditions. To implement the scheme, we need to combine the two boundary cells at the boundaries into one and find a polynomial approximation on the new cell. It is impossible to do that for general Dirichlet and Neumann boundary conditions, which are more realistic in practice, see e.g. \cite{Mixmethod1, Mixmethod2,Li2017}. In this paper, we will discuss the stability and error estimates of the new LDG methods for problems with Neumann and Dirichlet boundary conditions. The difficulty for the Neumann and Dirichlet boundary conditions is how to deal with the boundary cells of the dual mesh since two boundary cells cannot be combined. One possible way is to leave two boundary cells after generating the dual mesh, and introduce suitable numerical fluxes are the boundaries. For simplicity of presentation, we only demonstrate the proof for nonlinear parabolic equations
\begin{align}\label{heatorder1}
\begin{cases}
u_t=(a(u)p)_x,\\
p=A(u)_x,
\end{cases}
\end{align}
where $\displaystyle A(u)=\int^u a(t)\:dt$. The extension to general nonlinear convection-diffusion equations can be obtained following \cite{Overlap1}, hence we only demonstrate the results without proof.

The rest of the paper is organized as follows: we first discuss the LDG scheme on overlapping mesh in Section \ref{sec:overlappingmesh}. In Section \ref{sec:stablility}, we demonstrate the stability analysis of the scheme for the Neumann and Dirichlet boundary conditions. The error estimate will be provided in Section \ref{sec:errorestimate}. The extension to problems in two space dimensions will be discussed in Section \ref{sec:2Dscheme}. In Section \ref{sec:Exmaple}, the numerical experiments will be given to demonstrate the accuracy of the scheme on non-periodic boundary conditions. We will end in Section \ref{sec:conclusion} with concluding remarks.

\section{Preliminary}\label{sec:overlappingmesh}
In this section, we proceed to demonstrate the new LDG method for solving the one-dimensional diffusion equation \eqref{heatorder1} on overlapping meshes with three different boundary conditions, i.e. periodic, Neumann and Dirichlet boundary conditions.
\begin{figure}
\centering
\begin{tikzpicture}

\draw (0,0) -- (8,0);
\draw (0,0) -- (0,0.35);
\draw (4,0) -- (4,0.35);
\draw (8,0) -- (8,0.35);
\draw (2.5,-2) -- (6.5,-2);
\draw (2.5,-2) -- (2.5,-1.65);
\draw (6.5,-2) -- (6.5,-1.65);
\draw [dotted] (2.5,-1.65) -- (2.5,0);
\draw [dotted] (6.5,-1.65) -- (6.5,0);
\draw (2,0.4) node[above] {$I_i$};
\draw (6,0.4) node[above] {$I_{i+1}$};
\draw (4.5,-1.6) node[above] {$P_{i+1/2}$};
\draw (0,0) node[below] {$x_{i-1/2}$};
\draw (4,0) node[below] {$x_{i+1/2}$};
\draw (8,0) node[below] {$x_{i+3/2}$};
\draw (2.5,-2) node[below] {$\tilde{x}_{i}$};
\draw (6.5,-2) node[below] {$\tilde{x}_{i+1}$};

\end{tikzpicture}
\caption {Overlapping meshes}
\label{fig:overlappingmesh}
\end{figure}
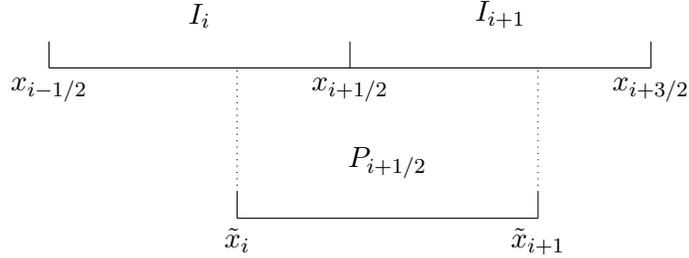
\subsection{Overlapping meshes}
The new LDG method solves the variables $u$ and $p$ on two different meshes as shown in Figure \ref{fig:overlappingmesh}. First, we define the primitive mesh on which the primary variable $u$ is solved. We give a partition of the computational domain $\Omega=[0,1]$ as $0=x_{\frac12}<x_{\frac32}<\cdots<x_{N+\frac12}=1$, and denote the $i-$th cell as
\[I_i=[x_{i-\frac{1}{2}},x_{i+\frac{1}{2}}],\:\:\:\:i=1,...,N.\]
Moreover, we denote
\[\Delta x_i=x_{i+\frac{1}{2}}-x_{i-\frac{1}{2}},\:\:\:\:x_i=\frac{x_{i+\frac{1}{2}}+x_{i-\frac{1}{2}}}{2}\]
as the cell length and the cell center of $I_i$, respectively, and define $\displaystyle \Delta x =\max_i\Delta x_i$.

We now demonstrate how to create the dual mesh, namely the P-mesh in \cite{Overlap1}, for solving the auxiliary variable $p$ for problems with periodic boundary conditions. We choose a point $\tilde{x}_i$ given as
\begin{equation}\label{pointondual}
\tilde{x}_i=x_i+\frac{\Delta x_i}{2}\xi_{0,i},\:\:\:\:\xi_{0,i}\in[-1,1],\:\:\:i=1,...,N.
\end{equation}
It is easy to see that $\tilde{x}_i\in [x_{i-\frac{1}{2}},x_{i+\frac{1}{2}}]$. Define
\[P_{i-\frac{1}{2}}=[\tilde{x}_{i-1},\tilde{x}_i],\:\:\:\:i=1,...,N,\]
as the ($i-\frac{1}{2})$-th cell of the dual mesh where we denote $\tilde{x}_0=\tilde{x}_N-1$.
Moreover,
\[\Delta \tilde{x}_{i-\frac{1}{2}}=\tilde{x}_{i}-\tilde{x}_{i-1},\:\:\:\:\tilde{x}_{i-\frac{1}{2}}=\frac{\tilde{x}_{i}+\tilde{x}_{i-1}}{2}\]
stand for the cell length and the cell center of $P_{i-\frac{1}{2}}$, respectively. The P-mesh will consist of all $P$ cells. For periodic boundary conditions, we can also define $P_{\frac{1}{2}}=[0,\tilde{x}_1]\cup[\tilde{x}_N,1]$ as stated in \cite{Overlap1}, see Figure  \ref{fig:meshPeriodic}.

Next, we demonstrate how to deal with the boundary cells on the P-mesh for problems with Neumann and Dirichlet boundary conditions. We leave the left and the right boundary cells and define $P_{\frac{1}{2}}=[0,\tilde{x}_1]$ and $P_{N+\frac{1}{2}}=[\tilde{x}_N,1]$, see Figure  \ref{fig:meshleaving}. This mesh is called the L-mesh. The other way is to combine the boundary cells with their neighbour as $P_{\frac{3}{2}}=[0,\tilde{x}_2]$ and $P_{N-\frac{1}{2}}=[\tilde{x}_{N-1},1]$, see Figure  \ref{fig:meshcombining}. This mesh is called C-mesh.

If we take $\xi_{0,i}$ as a constant independent of $i$, it is easy to check that
\[\min\{\Delta x_{i-1},\Delta x_i\}\leq \Delta \tilde{x}_{i-\frac{1}{2}}\leq \max\{\Delta x_{i-1},\Delta x_i\},\]
and hence we have $\max_i\Delta \tilde{x}_{i-\frac{1}{2}}\leq \Delta x.$
\begin{figure}
\centering
\begin{tikzpicture}
\draw (-6,0) -- (-2,0);
\draw (0,0) -- (4,0);
\draw (6,0) -- (10,0);
\draw [dash dot](-2,0) -- (0,0);
\draw [dash dot](4,0) -- (6,0);
\draw (-6,0) -- (-6,0.35);
\draw (-4,0) -- (-4,0.35);
\draw (-2,0) -- (-2,0.35);
\draw (0,0) -- (0,0.35);
\draw (2,0) -- (2,0.35);
\draw (4,0) -- (4,0.35);
\draw (6,0) -- (6,0.35);
\draw (8,0) -- (8,0.35);
\draw (10,0) -- (10,0.35);
\draw (-6,-2) -- (-2.65,-2);
\draw (1.35,-2) -- (3.35,-2);
\draw (7.35,-2) -- (10,-2);
\draw [dash dot](-2.65,-2) -- (1.35,-2);
\draw [dash dot](3.35,-2) -- (7.35,-2);
\draw (-6,-2) -- (-6,-1.65);
\draw (-4.65,-2) -- (-4.65,-1.65);
\draw (-2.65,-2) -- (-2.65,-1.65);
\draw (1.35,-2) -- (1.35,-1.65);
\draw (3.35,-2) -- (3.35,-1.65);
\draw (7.35,-2) -- (7.35,-1.65);
\draw (9.35,-2) -- (9.35,-1.65);
\draw (10,-2) -- (10,-1.65);
\draw [dotted] (-4.65,-1.65) -- (-4.65,0);
\draw [dotted] (-2.65,-1.65) -- (-2.65,0);
\draw [dotted] (1.35,-1.65) -- (1.35,0);
\draw [dotted] (3.35,-1.65) -- (3.35,0);
\draw [dotted] (7.35,-1.65) -- (7.35,0);
\draw [dotted] (9.35,-1.65) -- (9.35,0);
\draw (1,0.4) node[above] {$I_i$};
\draw (3,0.4) node[above] {$I_{i+1}$};
\draw (-5,0.4) node[above] {$I_1$};
\draw (-3,0.4) node[above] {$I_2$};
\draw (7,0.4) node[above] {$I_{N-1}$};
\draw (9,0.4) node[above] {$I_{N}$};
\draw (2.35,-1.6) node[above] {$P_{i+1/2}$};
\draw (-3.65,-1.6) node[above] {$P_{3/2}$};
\draw (-5.65,-1.6) node[above] {$P_{1/2}$};
\draw (8.35,-1.6) node[above] {$P_{N-1/2}$};
\draw (10,-1.6) node[above] {$P_{1/2}$};
\draw (-6,0) node[below] {$x_{1/2}$};
\draw (-4,0) node[below] {$x_{3/2}$};
\draw (-2,0) node[below] {$x_{5/2}$};
\draw (0,0) node[below] {$x_{i-1/2}$};
\draw (2,0) node[below] {$x_{i+1/2}$};
\draw (4,0) node[below] {$x_{i+3/2}$};
\draw (6,0) node[below] {$x_{N-3/2}$};
\draw (8,0) node[below] {$x_{N-1/2}$};
\draw (10,0) node[below] {$x_{N+1/2}$};
\draw (-6,-2) node[below] {$\tilde{x}_{0}$};
\draw (-4.65,-2) node[below] {$\tilde{x}_{1}$};
\draw (-2.65,-2) node[below] {$\tilde{x}_{2}$};
\draw (1.35,-2) node[below] {$\tilde{x}_{i}$};
\draw (3.35,-2) node[below] {$\tilde{x}_{i+1}$};
\draw (7.35,-2) node[below] {$\tilde{x}_{N-1}$};
\draw (9.35,-2) node[below] {$\tilde{x}_{N}$};
\draw (10.25,-2) node[below] {$\tilde{x}_{0}$};
\end{tikzpicture}
\caption {Overlapping meshes for periodic boundary conditions.}
\label{fig:meshPeriodic}

\begin{tikzpicture}
\draw (-6,0) -- (-2,0);
\draw (0,0) -- (4,0);
\draw (6,0) -- (10,0);
\draw [dash dot](-2,0) -- (0,0);
\draw [dash dot](4,0) -- (6,0);
\draw (-6,0) -- (-6,0.35);
\draw (-4,0) -- (-4,0.35);
\draw (-2,0) -- (-2,0.35);
\draw (0,0) -- (0,0.35);
\draw (2,0) -- (2,0.35);
\draw (4,0) -- (4,0.35);
\draw (6,0) -- (6,0.35);
\draw (8,0) -- (8,0.35);
\draw (10,0) -- (10,0.35);
\draw (-6,-2) -- (-2.65,-2);
\draw (1.35,-2) -- (3.35,-2);
\draw (7.35,-2) -- (10,-2);
\draw [dash dot](-2.65,-2) -- (1.35,-2);
\draw [dash dot](3.35,-2) -- (7.35,-2);
\draw (-6,-2) -- (-6,-1.65);
\draw (-4.65,-2) -- (-4.65,-1.65);
\draw (-2.65,-2) -- (-2.65,-1.65);
\draw (1.35,-2) -- (1.35,-1.65);
\draw (3.35,-2) -- (3.35,-1.65);
\draw (7.35,-2) -- (7.35,-1.65);
\draw (9.35,-2) -- (9.35,-1.65);
\draw (10,-2) -- (10,-1.65);
\draw [dotted] (-4.65,-1.65) -- (-4.65,0);
\draw [dotted] (-2.65,-1.65) -- (-2.65,0);
\draw [dotted] (1.35,-1.65) -- (1.35,0);
\draw [dotted] (3.35,-1.65) -- (3.35,0);
\draw [dotted] (7.35,-1.65) -- (7.35,0);
\draw [dotted] (9.35,-1.65) -- (9.35,0);
\draw (1,0.4) node[above] {$I_i$};
\draw (3,0.4) node[above] {$I_{i+1}$};
\draw (-5,0.4) node[above] {$I_1$};
\draw (-3,0.4) node[above] {$I_2$};
\draw (7,0.4) node[above] {$I_{N-1}$};
\draw (9,0.4) node[above] {$I_{N}$};
\draw (2.35,-1.6) node[above] {$P_{i+1/2}$};
\draw (-3.65,-1.6) node[above] {$P_{3/2}$};
\draw (-5.65,-1.6) node[above] {$P_{1/2}$};
\draw (8.35,-1.6) node[above] {$P_{N-1/2}$};
\draw (10,-1.6) node[above] {$P_{N+1/2}$};
\draw (-6,0) node[below] {$x_{1/2}$};
\draw (-4,0) node[below] {$x_{3/2}$};
\draw (-2,0) node[below] {$x_{5/2}$};
\draw (0,0) node[below] {$x_{i-1/2}$};
\draw (2,0) node[below] {$x_{i+1/2}$};
\draw (4,0) node[below] {$x_{i+3/2}$};
\draw (6,0) node[below] {$x_{N-3/2}$};
\draw (8,0) node[below] {$x_{N-1/2}$};
\draw (10,0) node[below] {$x_{N+1/2}$};
\draw (-6,-2) node[below] {$\tilde{x}_{0}$};
\draw (-4.65,-2) node[below] {$\tilde{x}_{1}$};
\draw (-2.65,-2) node[below] {$\tilde{x}_{2}$};
\draw (1.35,-2) node[below] {$\tilde{x}_{i}$};
\draw (3.35,-2) node[below] {$\tilde{x}_{i+1}$};
\draw (7.35,-2) node[below] {$\tilde{x}_{N-1}$};
\draw (9.35,-2) node[below] {$\tilde{x}_{N}$};
\draw (10.25,-2) node[below] {$\tilde{x}_{N+1}$};
\end{tikzpicture}
\caption {Overlapping mesh 1, L-mesh.}
\label{fig:meshleaving}

\centering
\begin{tikzpicture}
\draw (-6,0) -- (-2,0);
\draw (0,0) -- (4,0);
\draw (6,0) -- (10,0);
\draw [dash dot](-2,0) -- (0,0);
\draw [dash dot](4,0) -- (6,0);
\draw (-6,0) -- (-6,0.35);
\draw (-4,0) -- (-4,0.35);
\draw (-2,0) -- (-2,0.35);
\draw (0,0) -- (0,0.35);
\draw (2,0) -- (2,0.35);
\draw (4,0) -- (4,0.35);
\draw (6,0) -- (6,0.35);
\draw (8,0) -- (8,0.35);
\draw (10,0) -- (10,0.35);
\draw (-6,-2) -- (-2.65,-2);
\draw (1.35,-2) -- (3.35,-2);
\draw (7.35,-2) -- (10,-2);
\draw [dash dot](-2.65,-2) -- (1.35,-2);
\draw [dash dot](3.35,-2) -- (7.35,-2);
\draw (-6,-2) -- (-6,-1.65);
\draw (-2.65,-2) -- (-2.65,-1.65);
\draw (1.35,-2) -- (1.35,-1.65);
\draw (3.35,-2) -- (3.35,-1.65);
\draw (7.35,-2) -- (7.35,-1.65);
\draw (10,-2) -- (10,-1.65);
\draw [dotted] (-2.65,-1.65) -- (-2.65,0);
\draw [dotted] (1.35,-1.65) -- (1.35,0);
\draw [dotted] (3.35,-1.65) -- (3.35,0);
\draw [dotted] (7.35,-1.65) -- (7.35,0);
\draw (1,0.4) node[above] {$I_i$};
\draw (3,0.4) node[above] {$I_{i+1}$};
\draw (-5,0.4) node[above] {$I_1$};
\draw (-3,0.4) node[above] {$I_2$};
\draw (7,0.4) node[above] {$I_{N-1}$};
\draw (9,0.4) node[above] {$I_{N}$};
\draw (2.35,-1.6) node[above] {$P_{i+1/2}$};
\draw (-4.35,-1.6) node[above] {$P_{3/2}$};
\draw (8.65,-1.6) node[above] {$P_{N-1/2}$};
\draw (-6,0) node[below] {$x_{1/2}$};
\draw (-4,0) node[below] {$x_{3/2}$};
\draw (-2,0) node[below] {$x_{5/2}$};
\draw (0,0) node[below] {$x_{i-1/2}$};
\draw (2,0) node[below] {$x_{i+1/2}$};
\draw (4,0) node[below] {$x_{i+3/2}$};
\draw (6,0) node[below] {$x_{N-3/2}$};
\draw (8,0) node[below] {$x_{N-1/2}$};
\draw (10,0) node[below] {$x_{N+1/2}$};
\draw (-6,-2) node[below] {$\tilde{x}_{1}$};
\draw (-2.65,-2) node[below] {$\tilde{x}_{2}$};
\draw (1.35,-2) node[below] {$\tilde{x}_{i}$};
\draw (3.35,-2) node[below] {$\tilde{x}_{i+1}$};
\draw (7.35,-2) node[below] {$\tilde{x}_{N-1}$};
\draw (10.25,-2) node[below] {$\tilde{x}_{N}$};
\end{tikzpicture}
\caption {Overlapping mesh2, C-mesh.}
\label{fig:meshcombining}
\end{figure}
\subsection{Norms}
In this section, we define some norms that will be used throughout the paper.

For any interval $I$, we define $||u||_I$ and $||u||_{\infty,I}$ to be the standard $L^2$- and $L^\infty$- norms of $u$ on $I$, respectively. For any natural number $\ell$, we consider the norm of Sobolev space $H^\ell(I)$, defined by
\[||u||_{\ell,I}=\left\{ \sum_{0\leq\beta\leq\ell}\left\Vert\frac{\partial^\beta u}{\partial x^\beta}\right\Vert^2_I \right\}^{\frac{1}{2}}.\]
For convenience, if $I$ is the whole computational domain, then the corresponding subscript will be omitted.

Moreover, for any $u\in C(I_i)$, we define
\[||u||_{\Gamma_i}=|u^-_{i+\frac{1}{2}}|+|u^+_{i-\frac{1}{2}}|.\]
Similarly,  for any $u\in C(P_{i-\frac{1}{2}})$, we define
\[||u||_{\Gamma_{i-\frac{1}{2}}}=|u^-_{i-1}|+|u^+_{i}|.\]
\subsection{Numerical schemes with Neumann and Dirichlet boundary conditions }
In this section, we consider the LDG method for the following nonlinear diffusion equation  \begin{equation}\label{LDGequation}
\begin{cases}
u_t=(a(u)p)_x,\:\:\: x\in[0,1],\:\:t>0,\\
p=A(u)_x,\:\:\: x\in[0,1],\\
\end{cases}
\end{equation}
subject to the following boundary conditions
\begin{enumerate}
\item[] \textit{Neumann boundary condition}:
\begin{equation}\label{NeumannBC}
u_x(0,t)=u_x(1,t)=0.
\end{equation}
\item[] \textit{Dirichlet boundary condition}:
\begin{equation}\label{DirichletBC}
u(0,t)=u(1,t)=0.
\end{equation}
\end{enumerate}
We define the finite element spaces to be
\begin{align*}
V_h&=\{u_h: u_h|_{I_i}\in P^k(I_i), i=1,\ldots,N\},\\
P^L_h&=\{p_h: p_h|_{P_{i-\frac{1}{2}}}\in P^k(P_{i-\frac{1}{2}}), i=1,\ldots,N+1\},\\
P^C_h&=\{p_h: p_h|_{P_{i-\frac{1}{2}}}\in P^k(P_{i-\frac{1}{2}}), i=2,\ldots,N\},
\end{align*}
where $P^L_h$ and $P^C_h$ are finite element spaces for the L-mesh and C-mesh, respectively, and $P^k$ is the space of polynomials of degree up to $k$.

Multiplying \eqref{LDGequation} with the test functions and using the integration by parts, we obtain
\begin{align} \label{schemeu}
\int_{I_i}(u_h)_tv dx&=-\int_{I_i}a(u_h)p_hv_x dx+\hat{a}_{i+\frac{1}{2}}\widehat{p_h}_{i+\frac{1}{2}}v^-_{i+\frac{1}{2}}-\hat{a}_{i-\frac{1}{2}}\widehat{p_h}_{i-\frac{1}{2}}v^+_{i-\frac{1}{2}},\\
\int_{P_{i-\frac{1}{2}}}p_hw dx&=-\int_{P_{i-\frac{1}{2}}}A(u_h)w_x dx+ A(u_h(\tilde{x}_{i}))w^-_{i}-A(u_h(\tilde{x}_{i-1}))w^+_{i-1}. \label{schemep}
\end{align}
Then the LDG method on overlapping meshes is defined as follows:
\begin{itemize}
\item L-mesh:
find $(u_h,p_h)\in V_h\times P^L_h$, such that for any test functions $(v,w)\in V_h\times P^L_h$ we have \eqref{schemeu} and \eqref{schemep},
\item C-mesh:
find $(u_h,p_h)\in V_h\times P^C_h$, such that for any test functions $(v,w)\in V_h\times P^C_h$ we have \eqref{schemeu} and \eqref{schemep},
\end{itemize}
 where $v^-_{i+\frac{1}{2}}=v^-(x_{i+\frac{1}{2}})$ and $w^-_i=w^-(\tilde{x}_i)$. Likewise for $v^+_{i-\frac{1}{2}}$ and $w^+_{i-1}$. 

We denote the jump of the function $s$ across the cell interface $x=x_{i-\frac{1}{2}}$ as $[s]_{i-\frac{1}{2}}=s^+_{i-\frac{1}{2}}-s^-_{i-\frac{1}{2}}$. Similarly, $[w]_i=w^+_i-w^-_i$ denotes the jump of the function $w$ across the cell interface $x=\tilde{x}_i$ on the P-mesh. Due to the boundary conditions we also define the numerical fluxes of $u$ and $p$ on L-meshes as
\begin{enumerate}
\item[] \textit{Neumann boundary condition}: $u^-_{\frac{1}{2}}=u^+_{\frac{1}{2}}$,  $u^+_{N+\frac{1}{2}}=u^-_{N+\frac{1}{2}}$, $p^-_{1}=p^+_{1}$ and $p^+_{N}=p^-_{N}$.
\item[] \textit{Dirichlet boundary condition}: $u^-_{\frac{1}{2}}=0$, $u^+_{N+\frac{1}{2}}=0$, $p^-_{1}=0$ and $p^+_{N}=0$.
\end{enumerate}
The definition of the numerical fluxes at the boundary for C-mesh is similar. The numerical flux $\hat{a}$ at the point $x_{i-\frac{1}{2}}$ is defined as
\begin{equation}\label{aflux}
\hat{a}_{i-\frac{1}{2}}=\frac{[A(u_h)]_{i-\frac{1}{2}}}{[u_h]_{i-\frac{1}{2}}}.
\end{equation}
Also, we choose the numerical flux $\widehat{p_h}_{i-\frac{1}{2}}$ as the value of $p_h$ evaluated at $x=x_{i-\frac{1}{2}}$ with the penalty term
\begin{equation}\label{penalty}
\widehat{p_h}_{i-\frac{1}{2}}=p_h(x_{i-\frac{1}{2}})+\frac{\alpha_{i-\frac{1}{2}}}{\Delta\tilde{x}_{i-\frac{1}{2}}}[u_h]_{i-\frac{1}{2}}.
\end{equation}
Notice that $p_h$ is continuous at the interfaces of the primitive cells and hence $p_h(x_{i-\frac{1}{2}})$ is well defined.

Finally, we define
\begin{align}
H_u(u_h,p_h,v)&=-\sum_{i=1}^N\int_{I_i}a(u_h)p_hv_x dx+\sum_{i=1}^N\left(\hat{a}_{i+\frac{1}{2}}\hat{p}_{i+\frac{1}{2}}v^-_{i+\frac{1}{2}}-\hat{a}_{i-\frac{1}{2}}\hat{p}_{i-\frac{1}{2}}v^+_{i-\frac{1}{2}}\right),\label{HuNon}\\
H_p(u_h,w)&=-\sum_{i=1}^{N+1}\int_{P_{i-\frac{1}{2}}}A(u_h)w_x dx+\sum_{i=1}^{N+1}\left( A(u_h(\tilde{x}_{i}))w^-_{i}-A(u_h(\tilde{x}_{i-1}))w^+_{i-1}\right),\label{HpNon}
\end{align}
for L-mesh. For C-mesh, the two summations in \eqref{HpNon} are from $i=2$ to $N$
Then the LDG scheme can be rewritten as
\begin{align}
\int_\Omega (u_h)_tv dx&= H_u(u_h,p_h,v),\\
\int_\Omega p_hw dx&=H_p(u_h,w).
\end{align}

\section{Stability analysis}\label{sec:stablility}
In this section, we demonstrate the stability of the new LDG method on overlapping meshes with non-periodic boundary conditions.

\subsection{Neumann boundary condition}
In this subsection, the stability of the new LDG method on overlapping meshes with the Neumann boundary conditions will be demonstrated.
\begin{lemma}\label{lemma:1}
Suppose $H_u$ and $H_p$ are defined in \eqref{HuNon} and \eqref{HpNon}, respectively, and the dual mesh is given as either the L-mesh or the C-mesh, then we have
\begin{align}\label{Hu+HpNeumann}
H_u(u_h,p_h,v)+H_p(u_h,w)=-\sum_{i=2}^N\frac{[A(u_h)]_{i-\frac{1}{2}}}{[u_h]_{i-\frac{1}{2}}}\frac{\alpha_{i-\frac{1}{2}}}{\Delta\tilde{x}_{i-\frac{1}{2}}}[u_h]^2_{i-\frac{1}{2}}.
\end{align}
\end{lemma}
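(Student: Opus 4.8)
The plan is to take $v = u_h$ and $w = p_h$ as test functions in the two sums defining $H_u$ and $H_p$, add the results, and track how the volume and boundary terms cancel. First I would compute $H_u(u_h, p_h, u_h)$: the volume term is $-\sum_i \int_{I_i} a(u_h) p_h (u_h)_x\, dx$, and since $a(u_h)(u_h)_x = A(u_h)_x$ almost everywhere on each cell, this equals $-\sum_i \int_{I_i} A(u_h)_x\, p_h\, dx$. Integrating by parts back on each $I_i$ turns this into $\sum_i \int_{I_i} A(u_h)(p_h)_x\, dx$ minus the interface contributions $\sum_i \big(A(u_h^-)p_h^- \big)_{i+\frac12} - \big(A(u_h^+)p_h^+\big)_{i-\frac12}$. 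Meanwhile $H_p(u_h, p_h)$ has volume term $-\sum_i \int_{P_{i-1/2}} A(u_h)(p_h)_x\, dx$, which exactly cancels the volume term just produced, leaving only boundary/interface terms to reconcile.

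Next I would carefully reorganize the remaining boundary terms. On the primitive mesh, after summing, the $A(u_h)p_h$ interface terms regroup (by shifting the index in the telescoping sum and using that $p_h$ is single-valued at each $x_{i-1/2}$, i.e.\ $p_h(x_{i-1/2})$ is well defined) into $-\sum_{i} [A(u_h)]_{i-1/2}\, p_h(x_{i-1/2})$ plus the two physical boundary contributions at $x_{1/2}$ and $x_{N+1/2}$; separately, the penalty part of $\widehat{p_h}$ in the definition \eqref{HuNon} contributes $-\sum_i \tfrac{[A(u_h)]_{i-1/2}}{[u_h]_{i-1/2}}\tfrac{\alpha_{i-1/2}}{\Delta\tilde x_{i-1/2}}[u_h]_{i-1/2}^2$ via \eqref{aflux} and \eqref{penalty}. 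On the dual mesh, the terms $A(u_h(\tilde x_i))[w]_i$ built from $H_p$ regroup against the $A(u_h)(p_h)_x$ boundary data on the $P$-cells; since the $P$-cell endpoints $\tilde x_i$ lie in the interior of the primitive cells, $A(u_h)$ is continuous there, and these contributions pair up with the dual-mesh interface terms coming from the integration by parts. The upshot is that all the "bulk" interface terms on both meshes annihilate each other, and what survives is exactly the penalty sum — which, after accounting for the boundary fluxes, runs from $i=2$ to $N$ rather than over all interfaces.

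The main obstacle, and where I would spend the most care, is the treatment of the boundary cells and the Neumann flux conditions. With the Neumann choice $u_h^- = u_h^+$ at $x_{1/2}$ and $x_{N+1/2}$ (so $[u_h] = 0$ there, which is why the penalty sum omits $i=1$ and $i=N+1$), and $p_h^- = p_h^+$ at $\tilde x_1 = x$-side endpoints $p_1$ and $p_N$, the would-be boundary contributions $\hat a \,\widehat{p_h}\, u_h$ at $x_{1/2}$, $x_{N+1/2}$ and the $A(u_h)p_h$ terms at the extreme $P$-cell ends must be shown to vanish or cancel. This requires checking the L-mesh and C-mesh separately: for the L-mesh the outermost $P$-cells are $P_{1/2} = [0,\tilde x_1]$ and $P_{N+1/2} = [\tilde x_N, 1]$ with sums in \eqref{HpNon} running $i=1,\dots,N+1$, whereas for the C-mesh the outer cells are absorbed and the sums run $i=2,\dots,N$; in both cases one verifies that the flux $A(u_h)$ evaluated at the physical endpoints $0$ and $1$ drops out because the corresponding jump $[u_h]$ vanishes under the Neumann flux definition, and that the dual-mesh telescoping closes up consistently. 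Once the boundary bookkeeping is pinned down, the identity \eqref{Hu+HpNeumann} follows by collecting the one surviving term.
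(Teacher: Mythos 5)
Your proposal is correct and amounts to the same energy argument as the paper's proof: take $v=u_h$, $w=p_h$, integrate by parts once so the volume terms cancel, match the interface terms at $x_{i-\frac12}$ against the flux terms of $H_u$ (leaving only the penalty sum), and use the Neumann flux conventions to kill the physical boundary contributions. The only cosmetic difference is that you integrate by parts in the $H_u$ volume term over the $I$-cells (splitting at $\tilde x_i$, where the resulting jump terms pair with the $H_p$ fluxes), whereas the paper integrates by parts in $H_p$ over the $P$-cells (splitting at $x_{i-\frac12}$); the two routes are mirror images and both leave exactly the right-hand side of \eqref{Hu+HpNeumann}.
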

\begin{proof}
We only prove for L-mesh. The case for C-mesh is basically the same. Taking $w=p_h$ in \eqref{HpNon}, using integration by parts and applying the Neumann boundary conditions, we obtain
\begin{align} \notag
H_p(u_h,p_h)&=-\sum_{i=1}^{N+1}\int_{P_{i-\frac{1}{2}}}A(u_h)(p_h)_x dx+\sum_{i=1}^{N+1}\left(A(u_h(\tilde{x}_{i}))(p_h^-)_{i}-A(u_h(\tilde{x}_{i-1}))(p_h^+)_{i-1}\right)\\\notag
&=-\int_{P_{\frac{1}{2}}}A(u_h)(p_h)_x dx+A(u_h(\tilde{x}_{1}))(p_h^-)_{1}-A(u_h(\tilde{x}_{0}))(p_h^+)_{0}\\\notag
&\:\:\:\:\:-\sum_{i=2}^{N}\int^{x_{i-\frac{1}{2}}}_{\tilde{x}_{i-1}}A(u_h)(p_h)_x dx-\sum_{i=2}^{N}\int_{x_{i-\frac{1}{2}}}^{\tilde{x}_{i}}A(u_h)(p_h)_x dx\\\notag
&\:\:\:\:\:+\sum_{i=2}^{N}\left(A(u_h(\tilde{x}_{i}))(p_h^-)_{i}-A(u_h(\tilde{x}_{i-1}))(p_h^+)_{i-1}\right)\\\notag
&\:\:\:\:\:-\int_{P_{N+\frac{1}{2}}}A(u_h)(p_h)_x dx+A(u_h(\tilde{x}_{N+1}))(p_h^-)_{N+1}-A(u_h(\tilde{x}_{N}))(p_h^+)_{N}\\ \notag
&=\int_{P_{\frac{1}{2}}}a(u_h)(u_h)_xp_h dx\\\notag
&\:\:\:\:\:+\sum_{i=2}^{N}\int^{x_{i-\frac{1}{2}}}_{\tilde{x}_{i-1}}a(u_h)(u_h)_xp_h dx+\sum_{i=2}^{N}\int_{x_{i-\frac{1}{2}}}^{\tilde{x}_{i}}a(u_h)(u_h)_xp_h dx+\sum_{i=2}^{N}[A(u_h)]_{i-\frac{1}{2}}p_h(x_{i-\frac{1}{2}})\\\notag
&\:\:\:\:\:+\int_{P_{N+\frac{1}{2}}}a(u_h)(u_h)_xp_h dx\\
&=\sum_{i=1}^{N}\int_{I_i}a(u_h)(u_h)_xp_h dx+\sum_{i=2}^{N}[A(u_h)]_{i-\frac{1}{2}}p_h(x_{i-\frac{1}{2}}).\label{newHpNeumann}
\end{align}
Taking $v=u_h$ in \eqref{HuNon}, we obtain
\begin{align}\notag
H_u(u_h,p_h,v)&=-\sum_{i=1}^N\int_{I_i}p_ha(u_h)(u_h)_x dx-\sum_{i=2}^N\hat{a}_{i-\frac{1}{2}}\hat{p}_{i-\frac{1}{2}}[u_h]_{i-\frac{1}{2}}\\ \notag
&\:\:\:\:\:-\hat{a}_{\frac{1}{2}}\hat{p}_{\frac{1}{2}}(u_h)^+_{\frac{1}{2}}+\hat{a}_{N+\frac{1}{2}}\hat{p}_{N+\frac{1}{2}}(u_h)^-_{N+\frac{1}{2}}\\
&=-\sum_{i=1}^N\int_{I_i}p_ha(u_h)(u_h)_x dx-\sum_{i=2}^N\frac{[A(u_h)]_{i-\frac{1}{2}}}{[u_h]_{i-\frac{1}{2}}}\left(p_h(x_{i-\frac{1}{2}})+\frac{\alpha_{i-\frac{1}{2}}}{\Delta\tilde{x}_{i-\frac{1}{2}}}\right)[u_h]_{i-\frac{1}{2}}.\label{newHuNeumann}
\end{align}
where the last step, we obtain it by applying the Neumann boundary conditions.\\
Summing \eqref{newHpNeumann} and \eqref{newHuNeumann}, we have \eqref{Hu+HpNeumann} which further leads to the $L^2$ stability of the LDG method on overlapping meshes with Neumann boundary conditions.
\end{proof}

\subsection{Dirichlet boundary conditions}
In this subsection, we will describe the stability of the new LDG method on overlapping meshes for problems with Dirichlet boundary conditions.

With a minor change of the previous proof, we can obtain the following lemma.
\begin{lemma}\label{lemma:2}
Suppose $H_u$ and $H_p$ defined in \eqref{HuNon} and \eqref{HpNon}, respectively, then we have
\begin{align}\label{Hu+HpDirichlet}
H_u(u_h,p_h,v)+H_p(u_h,w)&=-\sum_{i=1}^{N+1}\frac{[A(u_h)]_{i-\frac{1}{2}}}{[u_h]_{i-\frac{1}{2}}}\frac{\alpha_{i-\frac{1}{2}}[u_h]^2_{i-\frac{1}{2}}}{\Delta\tilde{x}_{i-\frac{1}{2}}}.
\end{align}

\end{lemma}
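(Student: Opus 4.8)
\textbf{Proof proposal for Lemma \ref{lemma:2}.}

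The plan is to mirror the proof of Lemma \ref{lemma:1} almost verbatim, tracking carefully how the Dirichlet conditions alter the boundary contributions. Concretely, I would first take $w=p_h$ in \eqref{HpNon} and integrate by parts on each dual cell $P_{i-\frac12}$, for $i=1,\dots,N+1$. The interior cells $P_{i-\frac12}$, $i=2,\dots,N$, are split at the primitive interface $x_{i-\frac12}$ exactly as before, producing the volume term $\sum_{i=1}^N\int_{I_i}a(u_h)(u_h)_x p_h\,dx$ together with the jump term $\sum_{i=2}^N [A(u_h)]_{i-\frac12}\,p_h(x_{i-\frac12})$. The new feature is the two genuine boundary cells $P_{\frac12}=[0,\tilde x_1]$ and $P_{N+\frac12}=[\tilde x_N,1]$: here, instead of the Neumann-type cancellation, the Dirichlet condition $u(0,t)=u(1,t)=0$ (enforced numerically through $u^-_{\frac12}=0$, $u^+_{N+\frac12}=0$) forces $A(u_h)$ evaluated at the two physical endpoints to be $A(0)$, which we may take to be $0$ by the normalization of $A$. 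So after integration by parts on $P_{\frac12}$ the term $A(u_h(0))(p_h^+)_0$ vanishes, and similarly $A(u_h(1))(p_h^-)_{N+1}$ vanishes on $P_{N+\frac12}$, leaving only $\int_{P_{\frac12}}a(u_h)(u_h)_x p_h\,dx$ and $\int_{P_{N+\frac12}}a(u_h)(u_h)_x p_h\,dx$, which are absorbed into the single sum over $I_i$ just as in \eqref{newHpNeumann}.

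Next I would take $v=u_h$ in \eqref{HuNon}. The volume term is $-\sum_{i=1}^N\int_{I_i}p_h a(u_h)(u_h)_x\,dx$ as before. The interface sum, using $\hat a_{i-\frac12}=[A(u_h)]_{i-\frac12}/[u_h]_{i-\frac12}$ and the penalized flux \eqref{penalty}, contributes $-\sum_{i=2}^N \frac{[A(u_h)]_{i-\frac12}}{[u_h]_{i-\frac12}}\bigl(p_h(x_{i-\frac12})+\frac{\alpha_{i-\frac12}}{\Delta\tilde x_{i-\frac12}}[u_h]_{i-\frac12}\bigr)[u_h]_{i-\frac12}$. The difference from the Neumann case lies in the two boundary terms $-\hat a_{\frac12}\hat p_{\frac12}(u_h)^+_{\frac12}$ and $+\hat a_{N+\frac12}\hat p_{N+\frac12}(u_h)^-_{N+\frac12}$: under Dirichlet fluxes $u^-_{\frac12}=0$ and $u^+_{N+\frac12}=0$, one has $[u_h]_{\frac12}=u^+_{\frac12}-0=u^+_{\frac12}$ and $[u_h]_{N+\frac12}=0-u^-_{N+\frac12}=-u^-_{N+\frac12}$, so these two terms do \emph{not} drop out; instead they combine into penalized jump contributions of exactly the same algebraic form as the interior terms, which is precisely what extends the index range of the final sum from $\{2,\dots,N\}$ to $\{1,\dots,N+1\}$.

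Finally, summing the two expressions, the volume terms $\pm\sum_{i=1}^N\int_{I_i}a(u_h)(u_h)_x p_h\,dx$ cancel, the unpenalized interface terms $\pm\sum [A(u_h)]_{i-\frac12}p_h(x_{i-\frac12})$ cancel, and what survives is exactly $-\sum_{i=1}^{N+1}\frac{[A(u_h)]_{i-\frac12}}{[u_h]_{i-\frac12}}\frac{\alpha_{i-\frac12}[u_h]^2_{i-\frac12}}{\Delta\tilde x_{i-\frac12}}$, which is \eqref{Hu+HpDirichlet}. I would also remark, as in Lemma \ref{lemma:1}, that choosing $v=u_h$, $w=p_h$ in the scheme gives $\frac{d}{dt}\frac12\|u_h\|^2 = H_u+H_p \le 0$ since $\hat a_{i-\frac12}\ge 0$ (as $a\ge 0$ makes $A$ nondecreasing) and $\alpha_{i-\frac12}\ge 0$, yielding $L^2$ stability. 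The only genuinely delicate point is the bookkeeping at the two ends — making sure the signs in $[u_h]_{\frac12}$ and $[u_h]_{N+\frac12}$ and the $\pm$ in front of the boundary flux terms in $H_u$ conspire correctly so that the $i=1$ and $i=N+1$ summands appear with the same sign as the interior ones; everything else is a routine repetition of the Neumann argument.
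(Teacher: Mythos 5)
The paper itself omits the proof of Lemma \ref{lemma:2} (it only remarks that the argument mirrors Lemma \ref{lemma:1}), and your overall strategy is exactly the intended one: repeat the Neumann computation and track the two boundary cells $P_{\frac12}$, $P_{N+\frac12}$ together with the two boundary flux terms of $H_u$. Your treatment of $H_u$ is correct: with $u^-_{\frac12}=0$ and $u^+_{N+\frac12}=0$ one has $(u_h)^+_{\frac12}=[u_h]_{\frac12}$ and $(u_h)^-_{N+\frac12}=-[u_h]_{N+\frac12}$, so the two end terms take the form $-\hat a_{i-\frac12}\widehat{p_h}_{i-\frac12}[u_h]_{i-\frac12}$ for $i=1,N+1$, extending the interface sum to $i=1,\dots,N+1$. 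The final identity \eqref{Hu+HpDirichlet} is indeed true.

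However, there is a genuine bookkeeping gap in your $H_p$ computation, and it sits exactly at the point you yourself flag as delicate. If, as you propose, the boundary flux $A(u_h(\tilde x_0))$ is taken to be $A(0)=0$ (and likewise at $\tilde x_{N+1}$), then integrating by parts on $P_{\frac12}$ does \emph{not} leave only the volume integral: the endpoint evaluation produces $+A(u_h^+(0))(p_h^+)_0$, and since the flux term $-A(u_h(\tilde x_0))(p_h^+)_0$ that would normally cancel it has been set to zero, this term survives (similarly $-A(u_h^-(1))(p_h^-)_{N+1}$ survives on $P_{N+\frac12}$). Your stated $H_p$ drops these. The inconsistency then shows up in the final cancellation: from $H_u$ the unpenalized parts $-[A(u_h)]_{i-\frac12}\,p_h(x_{i-\frac12})$ appear for $i=1,\dots,N+1$, while your $H_p$ supplies the compensating $+[A(u_h)]_{i-\frac12}\,p_h(x_{i-\frac12})$ only for $i=2,\dots,N$; the two sums cannot ``cancel'' as you assert. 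The repair is immediate: the dropped boundary-cell remainders are precisely what is needed, because $[A(u_h)]_{\frac12}=A(u_h^+(0))-A(0)=A(u_h^+(0))$ and $p_h(x_{\frac12})=(p_h^+)_0$, so $-[A(u_h)]_{\frac12}p_h(x_{\frac12})+A(u_h^+(0))(p_h^+)_0=0$, and analogously at $x=1$. With that correction the two compensating omissions disappear, only the penalty parts survive for $i=1,\dots,N+1$, and \eqref{Hu+HpDirichlet} follows; your concluding stability remark (using $[A(u_h)]/[u_h]=a(\theta)\ge 0$ and $\alpha_{i-\frac12}\ge 0$) is then fine.
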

The proof is very similar to that of Lemma \ref{lemma:1}, so we omit it and only demonstrate the result as the following theorem.
\begin{theorem}
The LDG method introduced \eqref{schemeu} and \eqref{schemep} with the boundary conditions \eqref{NeumannBC} and \eqref{DirichletBC} are stable and
\[\frac{1}{2}\frac{d}{dt}||u_h||^2+||p_h||^2\leq 0.\]
\end{theorem}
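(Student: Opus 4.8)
The plan is to derive the energy identity by making the standard LDG choice of test functions and then invoking the two lemmas already established. Specifically, in the scheme
\begin{align*}
\int_\Omega (u_h)_t v\,dx &= H_u(u_h,p_h,v),\\
\int_\Omega p_h w\,dx &= H_p(u_h,w),
\end{align*}
I would take $v=u_h$ in the first equation and $w=p_h$ in the second, so that the left-hand sides become $\frac{1}{2}\frac{d}{dt}\|u_h\|^2$ and $\|p_h\|^2$ respectively. Adding the two equations gives
\[
\frac{1}{2}\frac{d}{dt}\|u_h\|^2 + \|p_h\|^2 = H_u(u_h,p_h,u_h) + H_p(u_h,p_h).
\]

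Next I would substitute the right-hand side using Lemma~\ref{lemma:1} for the Neumann case and Lemma~\ref{lemma:2} for the Dirichlet case. In both cases the right-hand side is $-\sum \frac{[A(u_h)]_{i-\frac12}}{[u_h]_{i-\frac12}}\,\frac{\alpha_{i-\frac12}}{\Delta\tilde x_{i-\frac12}}[u_h]_{i-\frac12}^2$ (the sum running from $i=2$ to $N$ for Neumann, and from $i=1$ to $N+1$ for Dirichlet). The key sign observation is that each summand is nonnegative: the penalty constants $\alpha_{i-\frac12}$ and the mesh sizes $\Delta\tilde x_{i-\frac12}$ are positive, $[u_h]_{i-\frac12}^2\ge 0$, and the flux ratio $\hat a_{i-\frac12}=\frac{[A(u_h)]_{i-\frac12}}{[u_h]_{i-\frac12}}$ equals $a(\xi)$ for some intermediate value $\xi$ by the mean value theorem applied to $A$ with $A'=a\ge 0$ (with the natural convention that this ratio is interpreted as $a(u_h)$ when $[u_h]_{i-\frac12}=0$). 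Hence the entire sum is nonnegative, its negative is nonpositive, and we conclude $\frac{1}{2}\frac{d}{dt}\|u_h\|^2+\|p_h\|^2\le 0$, which is the assertion of the theorem.

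A small point requiring care is the treatment of the boundary terms, which is where the L-mesh versus C-mesh and Neumann versus Dirichlet distinctions enter. For the Neumann case these terms drop out through the flux choices $u^-_{1/2}=u^+_{1/2}$, $u^+_{N+1/2}=u^-_{N+1/2}$, $p^-_1=p^+_1$, $p^+_N=p^-_N$ (so that the jumps $[u_h]_{1/2}$, $[u_h]_{N+1/2}$ effectively vanish and the corresponding flux products cancel), and this is already absorbed into Lemma~\ref{lemma:1}. For the Dirichlet case the fluxes $u^-_{1/2}=0$, $u^+_{N+1/2}=0$, $p^-_1=0$, $p^+_N=0$ produce the extra boundary contributions at $i=1$ and $i=N+1$ in the sum, again already accounted for in Lemma~\ref{lemma:2}. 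So no real obstacle remains here — the only genuinely substantive ingredient is the sign of the penalty terms, and that follows immediately from $a\ge 0$. Since both lemmas have been proved (or, for Lemma~\ref{lemma:2}, asserted as a minor modification), the theorem follows by this two-line argument once the test functions are chosen.
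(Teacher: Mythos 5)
Your proposal is correct and is essentially the argument the paper intends: the paper itself omits the proof of this theorem, deriving it directly from Lemmas \ref{lemma:1} and \ref{lemma:2} with the choices $v=u_h$, $w=p_h$, exactly as you do. Your added remark that $\hat{a}_{i-\frac12}=[A(u_h)]_{i-\frac12}/[u_h]_{i-\frac12}\ge 0$ by the mean value theorem since $A'=a\ge 0$ makes explicit the sign argument the paper leaves implicit.
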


\section{Error estimates} \label{sec:errorestimate}
In this section, we demonstrate the error estimates. We will consider linear equations. Moreover, for simplicity, we will discuss the problem with the Neumann boundary condition on L-meshes only. For the other cases, we can apply the same procedure.

First, we use $e$ to denote the error between the exact and numerical solutions i.e. $e_u=u-u_h$ and $e_p=p-p_h$, then we can get the error equations from \eqref{schemeu} and \eqref{schemep} as
\begin{align}
\int_{I_i}(e_u)_tv dx&=-\int_{I_i}e_pv_x dx+\widehat{e_p}_{i+\frac{1}{2}}v^-_{i+\frac{1}{2}}-\widehat{e_p}_{i-\frac{1}{2}}v^+_{i-\frac{1}{2}},\label{errequNeumann}\\
\int_{P_{i-\frac{1}{2}}}e_pw dx&=-\int_{P_{i-\frac{1}{2}}}e_uw_x dx+ e_u(\tilde{x}_{i})w^-_{i}-e_u(\tilde{x}_{i-1})w^+_{i-1}. \label{erreqpNeumann}
\end{align}
Next, we introduce some basic properties of the finite element space that will be used.
\begin{lemma}\label{errlemma1}
 Assuming $u\in V_h$, there exists constant $C>0$ independent of $\Delta x$ and $u$ such that for $\beta \geq 1$
\[||\partial^\beta_x u||_{I_i}\leq C\Delta x_i^{-\beta}||u||_{I_i},\:\:\:\:||u||_{\Gamma_i}\leq C\Delta x_i^{-1/2}||u||_{I_i}.\]
Similarly, for any  $u\in P^L_h$, there exists constant $C>0$ independent of $\Delta \tilde{x}$ and $u$ such that for $\beta \geq 1$
\[||\partial^\beta_x u||_{P_{i-\frac{1}{2}}}\leq C\Delta \tilde{x}_{i-\frac{1}{2}}^{-\beta}||u||_{P_{i-\frac{1}{2}}},\:\:\:\:||u||_{\Gamma_{i-\frac{1}{2}}}\leq C\Delta \tilde{x}_{i-\frac{1}{2}}^{-1/2}||u||_{P_{i-\frac{1}{2}}}.\]
\end{lemma}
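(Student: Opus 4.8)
The plan is the standard scaling (reference‑element) argument for inverse inequalities, carried out cell by cell so that no global mesh regularity is needed. Fix a primitive cell $I_i$ and introduce the affine bijection $F_i:[-1,1]\to I_i$, $F_i(\hat x)=x_i+\tfrac{\Delta x_i}{2}\hat x$. For $u\in P^k(I_i)$ set $\hat u=u\circ F_i\in P^k([-1,1])$; this is a polynomial of the same degree, so $u\mapsto\hat u$ is a linear isomorphism of $P^k(I_i)$ onto the fixed finite‑dimensional space $P^k([-1,1])$. A change of variables gives the scaling identities $\|u\|_{I_i}^2=\tfrac{\Delta x_i}{2}\|\hat u\|_{[-1,1]}^2$ and $\partial_x^\beta u=\big(\tfrac{2}{\Delta x_i}\big)^\beta(\partial_{\hat x}^\beta\hat u)\circ F_i^{-1}$, whence $\|\partial_x^\beta u\|_{I_i}^2=\big(\tfrac{2}{\Delta x_i}\big)^{2\beta}\tfrac{\Delta x_i}{2}\|\partial_{\hat x}^\beta\hat u\|_{[-1,1]}^2$; moreover the two endpoint values $u^-_{i+1/2}$, $u^+_{i-1/2}$ coincide with the endpoint values $\hat u(1)$, $\hat u(-1)$.

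Next I would invoke the fact that on the finite‑dimensional space $P^k([-1,1])$ all norms and seminorms are equivalent: for each fixed $\beta\ge1$ the maps $\hat u\mapsto\|\partial_{\hat x}^\beta\hat u\|_{[-1,1]}$ and $\hat u\mapsto|\hat u(1)|+|\hat u(-1)|$ are seminorms, hence bounded respectively by constants $\hat C_\beta$ and $\hat C_\Gamma$, depending only on $k$ (and $\beta$), times $\|\hat u\|_{[-1,1]}$. Substituting the scaling identities of the previous paragraph, the powers of $\tfrac{\Delta x_i}{2}$ combine to give $\|\partial_x^\beta u\|_{I_i}\le 2^\beta\hat C_\beta\,\Delta x_i^{-\beta}\|u\|_{I_i}$ and $\|u\|_{\Gamma_i}=|u^-_{i+1/2}|+|u^+_{i-1/2}|\le\hat C_\Gamma\,(2/\Delta x_i)^{1/2}\|u\|_{I_i}$, i.e. the claimed bounds with a constant $C=C(k,\beta)$ independent of $i$ and of $\Delta x$.

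Finally, the statements on the dual mesh follow verbatim: replace $I_i$ by $P_{i-1/2}$, $\Delta x_i$ by $\Delta\tilde x_{i-1/2}$, and use the affine map of $[-1,1]$ onto $P_{i-1/2}$; the trace norm $\|u\|_{\Gamma_{i-1/2}}=|u^-_{i-1}|+|u^+_i|$ is again the sum of the two endpoint values of the associated polynomial on $[-1,1]$, so the same estimate applies. I do not anticipate a genuine obstacle here — the only two points worth stating carefully are that (i) the reference element is the same for every cell, so the constants are uniform once $k$ is fixed, and (ii) since each inequality is local to a single cell, no quasi‑uniformity of either mesh is needed; the bounds are purely a consequence of polynomial scaling on an interval.
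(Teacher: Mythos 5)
Your proof is correct: the affine map to the reference element $[-1,1]$, the norm/seminorm equivalence on the fixed finite-dimensional space $P^k([-1,1])$, and the scaling back with the factors $(2/\Delta x_i)^\beta$ and $(\Delta x_i/2)^{1/2}$ give exactly the stated inverse and trace inequalities with constants depending only on $k$ and $\beta$. The paper states this lemma without proof, implicitly relying on precisely this standard scaling argument (which it invokes explicitly only for the projection estimates of the following lemma), so your write-up simply supplies the omitted details.
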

We also introduce the standard $L^2$ projections $P^1_k$ into $V_h$ and $P^2_k$ into $P^L_h$ by:
\[\int_{I_i}P^1_k uv dx=\int_{I_i} uv dx,\:\: \forall v\in P^k(I_i),\]
and
\[\int_{P_{i-\frac{1}{2}}}P^2_k uv dx=\int_{P_{i-\frac{1}{2}}} uv dx,\:\: \forall v\in P^k({P_{i-\frac{1}{2}}}),\]
respectively. By the scaling argument, we obtain the following lemma \cite{Penaltyterm}
\begin{lemma}\label{errlemma2}
Suppose the function $u(x)\in C^{k+1}(I_i)$, then there exists positive constant $C$ independent of $\Delta x$ and $u$, such that
\[||u-P^1_k u||_{I_i}+\Delta x_i||(u-P^1_k u)_x||_{I_i}+\Delta x^{1/2}_i||u-P^1_k u||_{\infty,{I_i}}\leq C\Delta x^{k+1}_i||u||_{k+1,I_i}.\]
Moreover, if $u(x)\in C^{k+1}(P_{i-\frac{1}{2}})$, then there exists positive constant $C$ independent of $\Delta \tilde{x}$ and $u$, such that
\[||u-P^2_k u||_{P_{i-\frac{1}{2}}}+\Delta \tilde{x}_{i-\frac{1}{2}}||(u-P^2_k u)_x||_{P_{i-\frac{1}{2}}}+\Delta \tilde{x}^{1/2}_{i-\frac{1}{2}}||u-P^2_k u||_{\infty,P_{i-\frac{1}{2}}}\leq C\Delta \tilde{x}^{k+1}_{i-\frac{1}{2}}||u||_{k+1,P_{i-\frac{1}{2}}},\]
where $||u||_{k+1,I}$ is the standard $H^{k+1}$-norm over the interval $I$.
\end{lemma}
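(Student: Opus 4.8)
The plan is to establish both inequalities by the standard affine scaling argument together with the Bramble--Hilbert lemma. The two assertions differ only in which cell, and hence which mesh length ($\Delta x_i$ for the primitive cell $I_i$ or $\Delta\tilde{x}_{i-\frac{1}{2}}$ for the dual cell $P_{i-\frac{1}{2}}$), plays the role of a generic length $h$, so I would carry out the argument once on a generic interval $I$ of length $h$ with $L^2$ projection $P_k$ onto $P^k(I)$ and then read off both statements by substitution.

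First I would introduce an affine map $F:\hat{I}\to I$ from a fixed reference interval $\hat{I}=[-1,1]$ onto $I$ and set $\hat{u}=u\circ F$. The change of variables produces the scaling relations
\[
||\partial_x^\beta v||_{I}=c\,h^{\frac{1}{2}-\beta}||\partial_{\hat{x}}^\beta \hat{v}||_{\hat{I}}\quad(\beta\geq 0),\qquad ||v||_{\infty,I}=||\hat{v}||_{\infty,\hat{I}},
\]
which I use to pass norms back and forth. The key compatibility is that the $L^2$ projection commutes with this scaling: since $F$ is affine and preserves polynomial degree, $\widehat{P_k u}=\hat{P}_k\hat{u}$, where $\hat{P}_k$ is the $L^2$ projection onto $P^k(\hat{I})$. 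This reduces the whole estimate to a single inequality on the fixed reference element, where no mesh dependence survives.

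On $\hat{I}$ I would check that $I-\hat{P}_k$ is a bounded linear operator from $H^{k+1}(\hat{I})$ into the space carrying the norm $||\cdot||_{\hat{I}}+||\partial_{\hat{x}}\cdot||_{\hat{I}}+||\cdot||_{\infty,\hat{I}}$ and that it annihilates every polynomial of degree at most $k$. Boundedness into $L^2$ and into the first-order seminorm is immediate from $||\hat{P}_k\hat{u}||_{\hat{I}}\leq||\hat{u}||_{\hat{I}}$; boundedness into $L^\infty$ follows from the one-dimensional Sobolev embedding $H^1(\hat{I})\hookrightarrow L^\infty(\hat{I})$ for the identity part, and for the projection part from the equivalence of all norms on the finite-dimensional space $P^k(\hat{I})$ together with the $L^2$-stability of $\hat{P}_k$. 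Since $I-\hat{P}_k$ kills $P^k$, the Bramble--Hilbert lemma yields
\[
||\hat{u}-\hat{P}_k\hat{u}||_{\hat{I}}+||\partial_{\hat{x}}(\hat{u}-\hat{P}_k\hat{u})||_{\hat{I}}+||\hat{u}-\hat{P}_k\hat{u}||_{\infty,\hat{I}}\leq C\,||\partial_{\hat{x}}^{k+1}\hat{u}||_{\hat{I}},
\]
with $C$ depending only on $k$ and $\hat{I}$.

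Finally I would scale back term by term, using $||\hat{u}-\hat{P}_k\hat{u}||_{\hat{I}}=c\,h^{-1/2}||u-P_ku||_{I}$, $||\partial_{\hat{x}}(\hat{u}-\hat{P}_k\hat{u})||_{\hat{I}}=c\,h^{1/2}||(u-P_ku)_x||_{I}$, and $||\partial_{\hat{x}}^{k+1}\hat{u}||_{\hat{I}}=c\,h^{k+1/2}||\partial_x^{k+1}u||_{I}$. The powers of $h$ are arranged precisely so that the prefactors $1$, $h$, and $h^{1/2}$ standing in front of the three terms of the asserted inequality each absorb the scaling mismatch and leave the common factor $h^{k+1}$; bounding $||\partial_x^{k+1}u||_{I}$ by the full $H^{k+1}$ norm then gives the right-hand side $C\,h^{k+1}||u||_{k+1,I}$. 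Substituting $h=\Delta x_i$, $I=I_i$, $P_k=P^1_k$ yields the first inequality and $h=\Delta\tilde{x}_{i-\frac{1}{2}}$, $I=P_{i-\frac{1}{2}}$, $P_k=P^2_k$ the second. The one step deserving genuine care is the $L^\infty$ bound on the reference element: one must verify that $I-\hat{P}_k$ is truly bounded into $L^\infty$, so that Bramble--Hilbert applies to all three terms simultaneously; the remaining bookkeeping of powers of $h$ is routine.
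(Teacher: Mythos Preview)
Your proposal is correct and follows precisely the approach the paper indicates: the paper does not give a proof but simply states ``By the scaling argument, we obtain the following lemma'' and cites Ciarlet's book, which is exactly the affine scaling plus Bramble--Hilbert argument you have written out in detail. Your treatment is a faithful elaboration of what the paper leaves implicit.
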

As the general treatment of the finite element method, we split the errors into two terms as
\[e_u=\eta_u-\xi_u,\:\:\:\:e_p=\eta_p-\xi_p,\]
where
\[\eta_u=u-P^1_ku,\:\:\:\xi_u=u_h-P^1_ku,\:\:\:\eta_p=p-P^2_kp,\:\:\:\xi_p=p_h-P^2_kp.\]
With the above notations, the equations \eqref{errequNeumann} and \eqref{erreqpNeumann} can be rewritten as
\begin{align} \label{xierrequNeumann}
\int_{I_i}(\xi_u)_tv dx&=\int_{I_i}e_pv_x dx-\widehat{e_p}_{i+\frac{1}{2}}v^+_{i+\frac{1}{2}}-\widehat{e_p}_{i-\frac{1}{2}}v^+_{i-\frac{1}{2}}\\
\int_{P_{i-\frac{1}{2}}}\xi_pw dx&=\int_{P_{i-\frac{1}{2}}}e_uw_x dx- e_u(\tilde{x}_{i})w^-_{i}+e_u(\tilde{x}_{i-1})w^+_{i-1} \label{xierreqpNeumann}
\end{align}
Now, we can state the main theorem.
\begin{theorem}
Suppose the exact solution $u\in C^{k+2}(\Omega)$ and the finite element space is made up of piecewise polynomials of degree $k$. Moreover, the numerical solutions satisfy \eqref{schemeu} and \eqref{schemep}. Then the error between the numerical and exact solutions satisfies
\[||e_u||+\int_0^T||e_p|| dt \leq C\Delta x^k,\]
where $C$ is independent of $\Delta x$.
\begin{proof}
Sum up \eqref{xierrequNeumann} and \eqref{xierreqpNeumann} with $v=\xi_u$ and $w=\xi_p$, and then sum up over $i$ to obtain
\begin{align} \notag
\frac{1}{2}\frac{d}{dt}||\xi_u||^2+||\xi_p||^2
&=\sum_{i=1}^N\int_{I_i}(\eta_p-\xi_p)(\xi_u)_x dx+\sum_{i=2}^N\left(\eta_{p_{i-\frac{1}{2}}}-\xi_{p_{i-\frac{1}{2}}}+\alpha_{i-\frac{1}{2}}\frac{[\eta_u-\xi_u]_{i-\frac{1}{2}}}{\Delta\tilde{x}_{i-\frac{1}{2}}}\right)[\xi_u]_{i-\frac{1}{2}}\\ \notag
&\:\:\:\:\:-\sum_{i=1}^N\int_{I_i}(\eta_u-\xi_u)(\xi_p)_x dx-\sum_{i=2}^N[\eta_u-\xi_u]_{i-\frac{1}{2}}\xi_p(x_{i-\frac{1}{2}})\\ \notag
&=\sum_{i=1}^N\int_{I_i}\eta_p(\xi_u)_x dx+\sum_{i=2}^N\left(\eta_{p_{i-\frac{1}{2}}}+\alpha_{i-\frac{1}{2}}\frac{[\eta_u]_{i-\frac{1}{2}}}{\Delta\tilde{x}_{i-\frac{1}{2}}}\right)[\xi_u]_{i-\frac{1}{2}}\\ \notag
&\:\:\:\:\:-\sum_{i=1}^N\int_{I_i}\eta_u(\xi_p)_x dx-\sum_{i=2}^N[\eta_u]_{i-\frac{1}{2}}\xi_p(x_{i-\frac{1}{2}})+H_u^N(\xi_u,\xi_p,\xi_u)+H_p^N(\xi_u,\xi_p)\\
&=R_1+R_2+R_3, \label{errorestimateterm}
\end{align}
where
\begin{align*}
R_1&=\sum_{i=1}^N\int_{I_i}\eta_p(\xi_u)_x dx-\sum_{i=1}^N\int_{I_i}\eta_u(\xi_p)_x dx,\\
R_2&=\sum_{i=2}^N\left(\alpha_{i-\frac{1}{2}}\frac{[\eta_u]_{i-\frac{1}{2}}}{\Delta\tilde{x}_{i-\frac{1}{2}}}\right)[\xi_u]_{i-\frac{1}{2}}+H_u^N(\xi_u,\xi_p,\xi_u)+H_p^N(\xi_u,\xi_p),\\
R_3&=\sum_{i=2}^N\eta_{p_{i-\frac{1}{2}}}[\xi_u]_{i-\frac{1}{2}}-\sum_{i=2}^N[\eta_u]_{i-\frac{1}{2}}\xi_p(x_{i-\frac{1}{2}}).
\end{align*}
Now we estimate $R_i$ where $i=1,2,3$ term by term.
\begin{align} \notag
R_1&\leq \sum_{i=1}^N\left(||\eta_p||_{I_i}||(\xi_u)_x||_{I_i}+||(\eta_u)_x||_{I_i}||\xi_p||_{I_i}\right) \\ \notag
&\leq \sum_{i=1}^N\left(||\eta_p||_{P_{i-\frac{1}{2}}\cup P_{i+\frac{1}{2}}}||\xi_u||_{I_i}+||(\eta_u)_x||_{I_i}||\xi_p||_{I_i}\right) \\ \notag
&\leq C\Delta x^k\sum_{i=1}^N\left[\left(||p||_{k+1,P_{i-\frac{1}{2}}}+||p||_{k+1,P_{i+\frac{1}{2}}}\right)||\xi_u||_{I_i}+||u||_{k+1,I_i}||\xi_p||_{I_i}\right]\\
&\leq C\Delta x^k \left(||\xi_u||+||\xi_p||\right), \label{R1}
\end{align}
where we applied the Cauchy-Schwarz inequality to the first step. In the second step, we used Lemmas \ref{errlemma1} and \ref{errlemma2}. Also, the Cauchy-Schwarz inequality was used again in the last step. Applying Lemma \ref{lemma:1}, we obtain the estimate of $R_2$
\begin{align} \notag
R_2&\leq \sum_{i=2}^N\frac{\alpha_{i-\frac{1}{2}}}{\Delta\tilde{x}_{i-\frac{1}{2}}}\left([\eta_u]_{i-\frac{1}{2}}[\xi_u]_{i-\frac{1}{2}}-[\xi_u]^2_{i-\frac{1}{2}}\right)\\ \notag
&\leq C \sum_{i=2}^N\frac{\alpha_{i-\frac{1}{2}}}{\Delta x}[\eta_u]^2_{i-\frac{1}{2}}\\ \notag
&\leq C \sum_{i=2}^N \alpha_{i-\frac{1}{2}}\Delta x^{2k}\left(||u||^2_{I_{i-1}}+||u||^2_{I_i}\right)\\
&\leq C\Delta x^{2k}, \label{R2}
\end{align}
where Lemma \ref{errlemma2} was applied in step 2. While steps 2 and 4 follow from direct computation. Finally, we estimate $R_3$.
\begin{align} \notag
R_3&\leq \sum_{i=2}^N||\eta_p||_{\infty,P_{i-\frac{1}{2}}}\left(||\xi_u||_{\Gamma_{i-1}}+||\xi_u||_{\Gamma_{i}}\right)+\sum_{i=2}^N\left(||\eta_u||_{\Gamma_{i-1}}+||\eta_u||_{\Gamma_{i}}\right)||\xi_p||_{\infty,I_{i}}\\ \notag
&\leq  C\Delta x^k \sum_{i=2}^N\left[||p||_{k+1,P_{i-\frac{1}{2}}}\left(||\xi_u||_{i-1}+||\xi_u||_{i}\right)+\left(||u||_{i-1}+||u||_{i}\right)||\xi_p||_{k+1,I_{i}}\right]\\
&\leq  C\Delta x^k \left(||p||_{k+1}||\xi_u||+||u||_{k+1}||\xi_p||\right),\label{R3}
\end{align}
where step 1 is straightforward, step 2 follows from Lemmas \ref{errlemma1} and \ref{errlemma2} and the last step we applied the Cauchy-Schwarz inequality. Substitute \eqref{R1}-\eqref{R3} into \eqref{errorestimateterm} to obtain
\[\frac{1}{2}\frac{d}{dt}||\xi_u||^2+||\xi_p||^2\leq C\Delta x^{2k}+C\Delta x^k(||\xi_u||+||\xi_p||),\]
which further yields
\[\frac{1}{2}\frac{d}{dt}||\xi_u||^2+||\xi_p||^2\leq C\Delta x^{2k}+||\xi_u||^2.\]
Finally, we can apply the Gronwall's inequality and complete the proof.
\end{proof}
\end{theorem}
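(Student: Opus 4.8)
The plan is to run the classical energy argument for LDG error estimates, adapted to the two–mesh structure, and to accept the loss of one power of $\Delta x$ that the overlapping construction forces. First I would introduce the standard $L^2$ projections $P^1_k$ onto $V_h$ on the primitive mesh and $P^2_k$ onto $P^L_h$ on the dual L-mesh, and split $e_u=\eta_u-\xi_u$, $e_p=\eta_p-\xi_p$ with $\eta_u=u-P^1_k u$, $\xi_u=u_h-P^1_k u$, $\eta_p=p-P^2_k p$, $\xi_p=p_h-P^2_k p$. Subtracting the scheme \eqref{schemeu}--\eqref{schemep} from the weak form satisfied by the exact solution gives the error equations \eqref{errequNeumann}--\eqref{erreqpNeumann}, which I rewrite in the $\xi,\eta$ variables as in \eqref{xierrequNeumann}--\eqref{xierreqpNeumann}; the $\eta$-parts of the volume terms drop out here because of the defining orthogonality of $P^1_k$ and $P^2_k$ on their respective meshes.

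Next I would test the two error equations with $v=\xi_u$ and $w=\xi_p$ and sum over all cells. The left side collapses to $\tfrac12\tfrac{d}{dt}\|\xi_u\|^2+\|\xi_p\|^2$. On the right side, the terms that are purely in $\xi$ assemble into $H_u(\xi_u,\xi_p,\xi_u)+H_p(\xi_u,\xi_p)$ plus the penalty cross term, and by Lemma \ref{lemma:1} this combination equals a nonpositive multiple of $\sum_{i=2}^N \hat a_{i-1/2}\,\alpha_{i-1/2}\Delta\tilde x_{i-1/2}^{-1}[\xi_u]^2_{i-1/2}$, so it can be discarded (or kept on the left to absorb part of the penalty). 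What remains is the collection of projection-error terms, which I group exactly into $R_1$ (the volume integrals $\int_{I_i}\eta_p(\xi_u)_x$ and $\int_{I_i}\eta_u(\xi_p)_x$), $R_2$ (the penalty jump cross terms together with the sign-definite $[\xi_u]^2$ terms from Lemma \ref{lemma:1}), and $R_3$ (the interface point-value terms $\eta_{p_{i-1/2}}[\xi_u]_{i-1/2}$ and $[\eta_u]_{i-1/2}\,\xi_p(x_{i-1/2})$).

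For $R_1$ I would use Cauchy--Schwarz, then the inverse inequality of Lemma \ref{errlemma1} to trade $\|(\xi_u)_x\|_{I_i}$ for $\Delta x_i^{-1}\|\xi_u\|_{I_i}$, together with the approximation bounds of Lemma \ref{errlemma2}; since $\eta_p$ lives on the two dual cells straddling $x_{i\pm1/2}$ one writes $\|\eta_p\|_{I_i}\le\|\eta_p\|_{P_{i-1/2}\cup P_{i+1/2}}\le C\Delta x^{k+1}\big(\|p\|_{k+1,P_{i-1/2}}+\|p\|_{k+1,P_{i+1/2}}\big)$, so $R_1=O(\Delta x^k(\|\xi_u\|+\|\xi_p\|))$ — this single inverse-inequality loss is exactly what makes the estimate suboptimal. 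For $R_2$, Lemma \ref{lemma:1} makes the $[\xi_u]^2$ part nonpositive, leaving $\sum\alpha_{i-1/2}\Delta x^{-1}[\eta_u]^2_{i-1/2}\le C\Delta x^{2k}$ via Lemma \ref{errlemma2}. For $R_3$, I would bound $\eta_p,\eta_u$ in $L^\infty$ by Lemma \ref{errlemma2} and apply the trace inverse inequality of Lemma \ref{errlemma1} to $\xi_u,\xi_p$, again getting $O(\Delta x^k)$. Combining yields $\tfrac12\tfrac{d}{dt}\|\xi_u\|^2+\|\xi_p\|^2\le C\Delta x^{2k}+C\Delta x^k(\|\xi_u\|+\|\xi_p\|)$; Young's inequality absorbs the $\|\xi_p\|$ term into the left side, leaving $\tfrac12\tfrac{d}{dt}\|\xi_u\|^2\le C\Delta x^{2k}+\|\xi_u\|^2$, so Gronwall (with $\|\xi_u(0)\|=O(\Delta x^{k+1})$ from projecting the initial data) gives $\|\xi_u\|\le C\Delta x^k$ on $[0,T]$, and integrating the $\|\xi_p\|^2$ inequality in $t$ gives $\int_0^T\|\xi_p\|\,dt\le C\Delta x^k$. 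The triangle inequality with Lemma \ref{errlemma2} for $\eta_u,\eta_p$ then delivers $\|e_u\|+\int_0^T\|e_p\|\,dt\le C\Delta x^k$.

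The main obstacle is the handling of the interface point values at $x_{i-1/2}$: because $p_h$ is read there as a \emph{dual-mesh} function at an interior point of a $P$-cell, the Gauss--Radau projection trick that kills such contributions in standard LDG is unavailable, and one is pushed into the inverse-inequality route that costs one order of accuracy — hence $\Delta x^k$ rather than $\Delta x^{k+1}$. A secondary technical point is the treatment of the uncombined boundary cells $P_{1/2}$ and $P_{N+1/2}$ of the L-mesh when doing the integration by parts that feeds Lemma \ref{lemma:1}; the Neumann numerical fluxes $p^-_1=p^+_1$, $p^+_N=p^-_N$, $u^-_{1/2}=u^+_{1/2}$, $u^+_{N+1/2}=u^-_{N+1/2}$ are precisely what makes those boundary terms cancel, so one must verify that the error fluxes inherit the same structure before invoking the stability identity.
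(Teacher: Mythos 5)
Your proposal is correct and follows essentially the same route as the paper: the same $\eta/\xi$ splitting via the $L^2$ projections on the two meshes, the same grouping into $R_1,R_2,R_3$ with the stability identity of Lemma \ref{lemma:1} absorbing the pure-$\xi$ terms, the same inverse-inequality/approximation estimates for each $R_i$, and Gronwall at the end. Your added remarks on why the interface point values force the suboptimal rate and on the boundary-flux cancellation are consistent with (and slightly more explicit than) what the paper does.
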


\section{Numerical scheme for two-dimensional case}\label{sec:2Dscheme}
In this section, we will construct the scheme in two-dimensional case for the following problem over the domain $\Omega=[0,1]\times[0,1]$
\begin{equation}\label{2DLDGequation}
\begin{cases}
u_t=(a(u)p)_x+(b(u)q)_x,\\
p=A(u)_x,\\
q=B(u)_y.
\end{cases}
\end{equation}
where $\displaystyle A(u)=\int^u a(t)\:dt$ and $\displaystyle B(u)=\int^u b(t)\:dt.$ We consider the following boundary conditions
\begin{enumerate}
\item[] \textit{Neumann boundary condition}:
\begin{equation}\label{2DNeumannBC}
u_x(0,y,t)=u_x(1,y,t)=u_y(x,0,t)=u_y(x,1,t)=0.
\end{equation}
\item[] \textit{Dirichlet boundary condition}:
\begin{equation}\label{2DDirichletBC}
u(0,y,t)=u(1,y,t)=u(x,0,t)=u(x,1,t)=0.
\end{equation}
\end{enumerate}
First, we give a rectangular decomposition of $\Omega$ which is the primitive mesh for the primary variable $u$. Let $0=x_{\frac12}<x_{\frac32}<\cdots<x_{N_x+\frac12}=1$, and  $0=y_{\frac12}<y_{\frac32}<\cdots<y_{N_y+\frac12}=1$ be grid points in $x$ and $y$ directions, respectively, and denote  the $i,j-th$ cell as
\[I_{ij}=I_i\times J_j=[x_{i-\frac{1}{2}},x_{i+\frac{1}{2}}]\times[y_{j-\frac{1}{2}},y_{j+\frac{1}{2}}]\]
for all $i=1,\ldots,N_x$ and $j=1,\ldots,N_y$. Also, we denote
\[\Delta x_i=x_{i+\frac{1}{2}}-x_{i-\frac{1}{2}},\:\:\:\:x_i=\frac{x_{i+\frac{1}{2}}+x_{i-\frac{1}{2}}}{2},\:\:\:\:\Delta y_i=y_{j+\frac{1}{2}}-y_{j-\frac{1}{2}},\:\:\:\:y_j=\frac{y_{j+\frac{1}{2}}+y_{j-\frac{1}{2}}}{2},\]
and
\[\Delta x=\max_i\Delta x_i,\:\:\:\:\Delta y=\max_j\Delta y_j,\:\:\:\:h=\max\{\Delta x, \Delta y\}. \]
Next, we define the P-mesh and Q-mesh for solving the auxiliary variable $p$ and $q$. we choose $\tilde{x}_i$ given as
\begin{equation}\label{2Dpointondualx}
\tilde{x}_i=x_i+\frac{\Delta x_i}{2}\xi_{0},\:\:\:\:\xi_{0}\in[-1,1]
\end{equation}
to define the P-mesh as
\[P_{i-\frac{1}{2},j}=[\tilde{x}_{i-1},\tilde{x}_i]\times[y_{j-\frac{1}{2}},y_{j+\frac{1}{2}}],\:\:\:i=1,...,N_x,\]
with $\tilde{x}_0=\tilde{x}_{N_x-1}$. Similarly, we pick a point $\tilde{y}_j$ given as
\begin{equation}\label{2Dpointondualy}
\tilde{y}_j=y_j+\frac{\Delta y_j}{2}\eta_{0},\:\:\:\:\eta_{0}\in[-1,1],
\end{equation}
and define Q-mesh as
\[Q_{i,j-\frac{1}{2}}=[x_{i-\frac{1}{2}},x_{i+\frac{1}{2}}]\times[\tilde{y}_{j-1},\tilde{y}_j],\:\:\:j=1,...,N_y,\]
with $\tilde{y}_0=\tilde{y}_{N_y-1}$.
Similar to the problem in one space dimension, we need to deal with the boundary cells for problems with the Neumann and Dirichlet boundary conditions. We can leave the left and right boundary cells as $P_{\frac{1}{2},j}=[0,\tilde{x}_1]\times J_j$, $P_{N_x+\frac{1}{2},j}=[\tilde{x}_{N_x},1]\times J_j$, $Q_{i,\frac{1}{2}}=I_i\times[0,\tilde{y}_1]$ and $Q_{i,N_y+\frac{1}{2}}=I_i\times[\tilde{y}_{N_y},1]$ . This mesh is called the L-mesh. The other way is to combine the boundary cells with their neighbour as $P_{\frac{3}{2},j}=[0,\tilde{x}_2]\times J_j, P_{N_x-\frac{1}{2}}=[\tilde{x}_{N_x-1},1]\times J_j, Q_{i,\frac{3}{2}}=I_i\times[0,\tilde{x}_2]$  and $Q_{N_y-\frac{1}{2}}=I_i\times[\tilde{y}_{N_y-1},1]$. This mesh is called C-mesh.

We define the finite element spaces for the L-mesh, $P^L_h$ and $Q^L_h$, and for the  C-mesh, $P^C_h$ and $Q^C_h$, to be
\begin{align*}
V_h&=\{u_h: u_h|_{I_ij}\in P^k(I_ij), i=1,\ldots,N_x, j=1,\ldots,N_y\},\\
P^L_h&=\{p_h: p_h|_{P_{i-\frac{1}{2}},j}\in P^k(P_{i-\frac{1}{2},j}), i=1,\ldots,N_x+1, j=1,\ldots,N_y\},\\
P^C_h&=\{p_h: p_h|_{P_{i-\frac{1}{2}},j}\in P^k(P_{i-\frac{1}{2},j}), i=2,\ldots,N_x, j=1,\ldots,N_y\},\\
Q^L_h&=\{p_h: p_h|_{Q_{i,j-\frac{1}{2}}}\in P^k(Q_{i,j-\frac{1}{2}}), i=1,\ldots,N_x, j=2,\ldots,N_y+1\},\\
Q^C_h&=\{p_h: p_h|_{Q_{i,j-\frac{1}{2}}}\in P^k(Q_{i,j-\frac{1}{2}}), i=1,\ldots,N_x, j=2,\ldots,N_y\},
\end{align*}
where $P^k$ is the space of polynomials of degree up to $k$.

Now, we can introduce the LDG method on overlapping mesh for \eqref{2DLDGequation}. Multiplying the test functions and using the integration by parts, we obtain
\begin{align} \notag
\int_{I_{ij}}(u_h)_tv dxdy&=-\int_{I_i}a(u_h)p_hv_x dxdy+\int_{J_j}\hat{a}_{i+\frac{1}{2},j}\widehat{p_h}_{i+\frac{1}{2},j}v^-_{i+\frac{1}{2},j}dy-\int_{J_j}\hat{a}_{i-\frac{1}{2},j}\widehat{p_h}_{i-\frac{1}{2},j}v^+_{i-\frac{1}{2},j}dy,\\\label{2Dschemeu}
&\:\:\:\:\:-\int_{J_i}b(u_h)p_hv_y dxdy+\int_{I_i}\hat{a}_{i,j+\frac{1}{2}}\widehat{p_h}_{i,j+\frac{1}{2}}v^-_{i,j+\frac{1}{2}}dx-\int_{I_i}\hat{a}_{i,j-\frac{1}{2}}\widehat{p_h}_{i,j-\frac{1}{2}}v^+_{i,j-\frac{1}{2}}dx,\\ \label{2Dschemep}
\int_{P_{i-\frac{1}{2},j}}p_hw dxdy&=-\int_{P_{i-\frac{1}{2},j}}A(u_h)w_x dxdy+ \int_{J_j}A(u_h(\tilde{x}_{i}))w^-_{i}dy-\int_{J_j}A(u_h(\tilde{x}_{i-1}))w^+_{i-1}dy,\\ \label{2Dschemeq}
\int_{Q_{i,j-\frac{1}{2}}}q_hz dxdy&=-\int_{Q_{i,j-\frac{1}{2}}}B(u_h)z_y dxdy+ \int_{I_i}B(u_h(\tilde{y}_{j}))z^-_{j}dx-\int_{I_i}B(u_h(\tilde{y}_{j-1}))z^+_{j-1}dx.
\end{align}

Then the LDG method on overlapping meshes for \eqref{2DLDGequation} is defined as follows:
\begin{itemize}
\item L-mesh:
find $(u_h,p_h,q_h)\in V_h\times P^L_h\times Q^L_h$, such that for any test functions $(v,w,z)\in V_h\times P^L_h\times Q^L_h$ we have \eqref{2Dschemeu} - \eqref{2Dschemeq},
\item C-mesh:
find $(u_h,p_h,q_h)\in V_h\times P^C_h\times Q^C_h$, such that for any test functions $(v,w,z)\in V_h\times P^C_h\times Q^C_h$ we have \eqref{2Dschemeu} - \eqref{2Dschemeq}.
\end{itemize}
We denoted $\displaystyle u^+_{i-\frac{1}{2},j}, u^-_{i+\frac{1}{2},j}, u^+_{i,j-\frac{1}{2}}, u^+_{i,j+\frac{1}{2}} $ as the traces of $u\in V_h$ on the four edges of $I_{ij}$, respectively. Likewise for the traces of along the vertical edges of $P_{i-\frac{1}{2},j}$ and the horizontal edges of $Q_{i,j-\frac{1}{2}}$. Moreover, we use $[u]=u^+-u^-$ and $\displaystyle \{u\}=\frac{1}{2}(u^++u^-)$ as the jump and average of $u$ at the cells interfaces, receptively.

The numerical flux $\hat{a}$ along the edge $x_{i-\frac{1}{2}}$ is defined as
\begin{equation}\label{2daflux}
\hat{a}_{i-\frac{1}{2},j}=\frac{[A(u_h)]_{i-\frac{1}{2},j}}{[u_h]_{i-\frac{1}{2}},j}.
\end{equation}
Similarly, the numerical flux $\hat{b}$ along the edge $y_{j-\frac{1}{2}}$ is defined as
\begin{equation}\label{2dbflux}
\hat{b}_{j-\frac{1}{2}}=\frac{[B(u_h)]_{i,j-\frac{1}{2}}}{[u_h]_{i,j-\frac{1}{2}}}.
\end{equation}
Also, we choose the numerical flux
\begin{equation}\label{2dppenalty}
\widehat{p_h}_{i-\frac{1}{2},j}=p_h(x_{i-\frac{1}{2}},y)+\frac{\alpha_{i-\frac{1}{2},j}}{\Delta\tilde{x}_{i-\frac{1}{2},j}}[u_h]_{i-\frac{1}{2},j},
\end{equation}
and
\begin{equation}\label{2dqpenalty}
\widehat{q_h}_{i,j-\frac{1}{2}}=q_h(x,y_{j-\frac{1}{2}})+\frac{\alpha_{i,j-\frac{1}{2}}}{\Delta\tilde{y}_{i,j-\frac{1}{2},j}}[u_h]_{i,j-\frac{1}{2}},
\end{equation}
where $[s]_{i-\frac{1}{2},j}=s^+_{i-\frac{1}{2},j}-s^-_{i-\frac{1}{2},j}$ stands for the jump of the function $s$ across the cell boundary $\{x_{i-\frac{1}{2}}\}\times J_j$. Similarly for $[s]_{i,j-\frac{1}{2}}$.

To obtain the stability analysis and error estimates, we can follow the same analyses for the problem in one-dimensional space. Therefore, we will omit the proof and only state the results in the following two.

\begin{theorem}
The LDG method introduced \eqref{2Dschemeu}, \eqref{2Dschemep} and \eqref{2Dschemeq} with the boundary conditions \eqref{2DNeumannBC} and \eqref{2DDirichletBC} are stable and
\[\frac{1}{2}\frac{d}{dt}||u_h||^2+||p_h||^2+||q_h||^2\leq 0.\]
\end{theorem}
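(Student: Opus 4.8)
The plan is to mimic, in both coordinate directions simultaneously, the one-dimensional argument behind Lemma~\ref{lemma:1} and the $L^2$-stability theorem that follows it. First I would take $v=u_h$ in \eqref{2Dschemeu}, $w=p_h$ in \eqref{2Dschemep} and $z=q_h$ in \eqref{2Dschemeq}, and sum over all cells $I_{ij}$, $P_{i-\frac12,j}$ and $Q_{i,j-\frac12}$. The left-hand sides collapse to $\tfrac12\tfrac{d}{dt}\|u_h\|^2$, $\|p_h\|^2$ and $\|q_h\|^2$, so the whole task reduces to showing that the sum of the three right-hand sides is nonpositive.

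Next I would integrate by parts back in \eqref{2Dschemep} and \eqref{2Dschemeq}, exactly as in \eqref{newHpNeumann}. Since the $P$-mesh interfaces in the $x$-direction are the points $\tilde x_i$ while each interior cell $P_{i-\frac12,j}$ contains exactly one primitive interface $x_{i-\frac12}$ in its interior, integrating $-\int_{P_{i-\frac12,j}}A(u_h)(p_h)_x$ by parts and splitting at $x_{i-\frac12}$ produces $\int_{P_{i-\frac12,j}}a(u_h)(u_h)_xp_h$ plus an edge term $\int_{J_j}[A(u_h)]_{i-\frac12,j}\,p_h(x_{i-\frac12},y)\,dy$; the contributions at $\tilde x_{i-1},\tilde x_i$ telescope against the flux terms $A(u_h(\tilde x_i))w^-_i$ across neighbouring $P$-cells, and $p_h(x_{i-\frac12},y)$ is well defined because $p_h$ is continuous across $x=x_{i-\frac12}$. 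Summing over $i,j$ turns the volume pieces into $\sum_{ij}\int_{I_{ij}}a(u_h)(u_h)_xp_h$, and the analogous manipulation in the $y$-direction for \eqref{2Dschemeq} gives $\sum_{ij}\int_{I_{ij}}b(u_h)(u_h)_yq_h$ together with edge terms $\int_{I_i}[B(u_h)]_{i,j-\frac12}\,q_h(x,y_{j-\frac12})\,dx$.

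Then I would take $v=u_h$ in \eqref{2Dschemeu}: the two volume integrals $-\int a(u_h)p_h(u_h)_x$ and $-\int b(u_h)q_h(u_h)_y$ cancel the volume terms just obtained, while the numerical-flux boundary sums, after substituting $\hat a_{i-\frac12,j}=[A(u_h)]_{i-\frac12,j}/[u_h]_{i-\frac12,j}$, $\hat b_{i,j-\frac12}=[B(u_h)]_{i,j-\frac12}/[u_h]_{i,j-\frac12}$ and the penalty fluxes \eqref{2dppenalty}, \eqref{2dqpenalty}, combine with the edge terms from the previous step. The $p_h(x_{i-\frac12},y)$ and $q_h(x,y_{j-\frac12})$ pieces cancel, leaving only the penalty contributions
\begin{align*}
&-\sum_{i,j}\int_{J_j}\frac{[A(u_h)]_{i-\frac12,j}}{[u_h]_{i-\frac12,j}}\frac{\alpha_{i-\frac12,j}}{\Delta\tilde x_{i-\frac12,j}}[u_h]^2_{i-\frac12,j}\,dy\\
&\quad-\sum_{i,j}\int_{I_i}\frac{[B(u_h)]_{i,j-\frac12}}{[u_h]_{i,j-\frac12}}\frac{\alpha_{i,j-\frac12}}{\Delta\tilde y_{i,j-\frac12}}[u_h]^2_{i,j-\frac12}\,dx,
\end{align*}
which is the 2D analogue of \eqref{Hu+HpNeumann}/\eqref{Hu+HpDirichlet}. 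Because $a,b\ge0$, the mean value theorem gives $[A(u_h)]/[u_h]\ge0$ and $[B(u_h)]/[u_h]\ge0$, and with $\alpha\ge0$ every summand is nonpositive, so $\tfrac12\tfrac{d}{dt}\|u_h\|^2+\|p_h\|^2+\|q_h\|^2\le0$. The boundary edges are dealt with as in one dimension: for Neumann data the flux conventions make the one-sided boundary traces of $u_h$ and of $p_h,q_h$ coincide so those terms vanish, and for Dirichlet data the boundary traces of $u_h$ are zero.

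I expect the only real difficulty to be organizational rather than analytical: keeping the tensor-product edge sums straight, in particular verifying that the $y$-integration does not disturb the telescoping of the flux values $A(u_h(\tilde x_i))$ across adjacent $P$-cells, and that the boundary $P$- and $Q$-cells (which, on either the L-mesh or the C-mesh, are not split by a primitive interface) contribute no interface penalty term, so that the directional sums start at the first interior interface exactly as in Lemma~\ref{lemma:1}. Beyond this bookkeeping, no new estimate is needed.
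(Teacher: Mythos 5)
Your proposal is correct and follows exactly the route the paper intends: the paper omits the proof of this theorem, stating only that it "follows the same analyses for the problem in one-dimensional space," and your argument is precisely that one-dimensional analysis (Lemma~\ref{lemma:1} and the ensuing $L^2$-stability theorem) carried out direction by direction, with the volume terms cancelling and only the sign-definite penalty sums surviving. The bookkeeping points you flag (the $y$-integration riding along passively in the telescoping of the $A(u_h(\tilde x_i))$ fluxes, and the boundary $P$- and $Q$-cells contributing no interior penalty term) are handled exactly as in the paper's 1D proof, so nothing further is needed.
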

\begin{theorem}
Suppose the exact solution for linear parabolic equation \eqref{2DLDGequation} with $a(u)=b(u)=1$ satisfies $u\in C^{k+1}(\Omega)$ and the finite element space is made up of piecewise polynomial of degree k in each cell. The numerical solutions satisfy \eqref{2Dschemeu} - \eqref{2Dschemeq}. Then the error between the numerical and exact solutions satisfies
\[||u-u_h||+\int_0^T||p-p_h||+||q-q_h|| dt \leq Ch^k,\]
where $C$ is independent of $h$.
\end{theorem}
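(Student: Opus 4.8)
The plan is to mirror the proof of the previous theorem essentially line for line, carrying the extra auxiliary variable $q$ and its $Q$-mesh along throughout. Since $a(u)=b(u)=1$ we have $A(u)=B(u)=u$ and the fluxes $\hat a,\hat b$ all collapse to $1$, so the scheme is linear and, subtracting \eqref{2Dschemeu}--\eqref{2Dschemeq} from the weak forms satisfied by the exact solution, one obtains three error relations generalizing \eqref{errequNeumann}--\eqref{erreqpNeumann}: $e_p$ and $e_q$ are tested against $v\in V_h$ in the $u$-equation, $e_u$ against $w\in P^L_h$ in the $p$-equation, and $e_u$ against $z\in Q^L_h$ in the $q$-equation, where $e_u=u-u_h$, $e_p=p-p_h$, $e_q=q-q_h$. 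As in the one-dimensional case I would split
\[e_u=\eta_u-\xi_u,\qquad e_p=\eta_p-\xi_p,\qquad e_q=\eta_q-\xi_q,\]
with $\eta_u=u-P^1_ku$, $\xi_u=u_h-P^1_ku$, and $\eta_p,\xi_p$ (resp. $\eta_q,\xi_q$) defined through the $L^2$ projections onto $P^L_h$ (resp. $Q^L_h$), and invoke the two-dimensional analogues of Lemmas \ref{errlemma1} and \ref{errlemma2}, which hold cell by cell by the same scaling arguments.

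Next I would establish the energy identity. Taking $v=\xi_u$, $w=\xi_p$, $z=\xi_q$ in the three error relations, summing over all cells, and using the two-dimensional counterpart of the stability identity of Lemma \ref{lemma:1} (the same computation that underlies the stated 2D stability theorem) to cancel the bilinear form in the $\xi$-variables alone, the volume terms $\eta_u\xi_u$, $\eta_p\xi_p$, $\eta_q\xi_q$ drop out by the projection property and one is left with
\[\frac12\frac{d}{dt}\|\xi_u\|^2+\|\xi_p\|^2+\|\xi_q\|^2=R_1+R_2+R_3,\]
where $R_1$ collects the interior terms $\int\eta_p(\xi_u)_x$, $\int\eta_u(\xi_p)_x$ together with their $y$-counterparts $\int\eta_q(\xi_u)_y$, $\int\eta_u(\xi_q)_y$; $R_2$ collects the penalty contributions $\alpha_{i-\frac12,j}[\eta_u][\xi_u]$ (and the $y$-analogue) together with the $\xi$-only penalty terms produced by the stability identity; and $R_3$ collects the interface terms $\eta_p[\xi_u]$, $[\eta_u]\xi_p$ along vertical edges and $\eta_q[\xi_u]$, $[\eta_u]\xi_q$ along horizontal edges.

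The three estimates then parallel \eqref{R1}--\eqref{R3}. For $R_1$: move the derivative off $\xi$ by the inverse inequality, bound $\eta_p$ over a primitive cell by its norm over the two overlapping $P$-cells (since $I_{ij}\subset P_{i-\frac12,j}\cup P_{i+\frac12,j}$ when $\xi_0\in[-1,1]$), similarly for $\eta_q$ over the overlapping $Q$-cells, use the approximation estimate on $\eta_u,\eta_p,\eta_q$, and apply Cauchy--Schwarz to get $R_1\le Ch^k(\|\xi_u\|+\|\xi_p\|+\|\xi_q\|)$. For $R_2$: the stability identity turns the sum into $\sum\frac{\alpha_{i-\frac12,j}}{\Delta\tilde x_{i-\frac12,j}}\big([\eta_u][\xi_u]-[\xi_u]^2\big)$ plus its $y$-analogue, and Young's inequality together with the jump bound $[\eta_u]^2\le Ch^{2k}(\|u\|^2_{I_{i-1,j}}+\|u\|^2_{I_{ij}})$ (using $\Delta\tilde x_{i-\frac12,j}\ge C\min\Delta x$) gives $R_2\le Ch^{2k}$. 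For $R_3$, which in two dimensions consists of genuine edge integrals, I would use Cauchy--Schwarz on each edge, the 2D trace inequalities of Lemma \ref{errlemma1} for the $\xi$-factors, and the $L^\infty$ bound of Lemma \ref{errlemma2} for $\eta_p,\eta_q,\eta_u$ — noting that a vertical interface $\{x_{i-\frac12}\}\times J_j$ lies in the interior of $P_{i-\frac12,j}$, so that $\|\eta_p\|_{L^2(\{x_{i-\frac12}\}\times J_j)}\le|J_j|^{1/2}\|\eta_p\|_{\infty,P_{i-\frac12,j}}$ — to arrive at $R_3\le Ch^k(\|\xi_u\|+\|\xi_p\|+\|\xi_q\|)$. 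Substituting the three bounds and using Young's inequality to absorb $\|\xi_p\|$ and $\|\xi_q\|$ into the left-hand side yields
\[\frac12\frac{d}{dt}\|\xi_u\|^2+\frac12\|\xi_p\|^2+\frac12\|\xi_q\|^2\le Ch^{2k}+C\|\xi_u\|^2,\]
and Gronwall's inequality (with $\xi_u$ zero at $t=0$, or $O(h^{k+1})$ from the initial projection) gives $\|\xi_u\|^2+\int_0^T(\|\xi_p\|^2+\|\xi_q\|^2)\,dt\le Ch^{2k}$; the triangle inequality with the approximation estimates for $\eta_u,\eta_p,\eta_q$, plus Cauchy--Schwarz in $t$ on the time integrals, then converts this into $\|u-u_h\|+\int_0^T(\|p-p_h\|+\|q-q_h\|)\,dt\le Ch^k$.

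The step I expect to be the genuine obstacle is the bookkeeping of the cross-mesh interface contributions in $R_3$: unlike the one-dimensional situation these are one-dimensional edge integrals pairing a function living on one mesh against the jump of a function living on the other, and one must be careful that the relevant interface is interior to the dual cell so that only an $L^\infty$/volume bound (not a boundary trace) is available for $\eta_p,\eta_q$, while still recovering the full power $h^k$ after pairing with the $h^{-1/2}$ from the trace inequality on $\xi$. A secondary point to monitor throughout is the uniformity of all constants with respect to the mesh-perturbation parameters $\xi_0,\eta_0$ and the penalty coefficients $\alpha$, which is where the comparisons $\max_i\Delta\tilde x_{i-\frac12}\le\Delta x$ and its $y$-analogue are needed.
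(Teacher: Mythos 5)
Your proposal is correct and follows exactly the route the paper intends: the authors explicitly omit the two-dimensional proof, stating only that it ``follows the same analyses for the problem in one-dimensional space,'' and your argument is precisely that dimension-by-dimension extension of the 1D energy estimate (with the $R_1$--$R_3$ decomposition, the overlap containment $I_{ij}\subset P_{i-\frac12,j}\cup P_{i+\frac12,j}$, and the interior-edge treatment of the cross-mesh terms all handled as in Section \ref{sec:errorestimate}). Your closing remarks on the edge integrals in $R_3$ and on uniformity of constants correctly identify the only places where the 2D bookkeeping genuinely differs from the 1D case, and your resolution of them is sound.
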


\section{Numerical Experiments}\label{sec:Exmaple}
In this section, several numerical experiments will be given to demonstrate the stability and accuracy of the new LDG method on overlapping mesh for non-periodic boundary conditions.
\begin{ex}\label{exCFL} To compare the CFL number for two different meshes, we consider third-order SSP Runge-Kutta time discretization \cite{SSPRK} to solve the following heat equation in one space dimension
\begin{equation}
u_t=u_{xx},\:\:\:\:x\in[0,2\pi],
\end{equation}
subject to Neumann and Dirichlet boundary conditions
\begin{align*}
u_x(0,t)&=u_x(2\pi,t)=0,\\
u(0,t)&=u(2\pi,t)=0,
\end{align*}
respectively.
\end{ex}
We consider a uniform mesh and use linear polynomial $P^1$ with the dual mesh is generated by using the midpoint of the primitive mesh to compare the CFL number to the two different meshes. The results in Table \ref{table:exCFLresults} shows that the CFL number for C-mesh is larger than that for L-mesh. This is mainly because the small cell effect does not works for C-mesh as we combine the small cells near the boundary with its neighbor.
\begin{table}[H]
\begin{center}
\begin{tabular}{|c|c|cc|cc|cc|cc|}
\hline
\multirow{3}{*}{CFL}&\multirow{3}{*}{N}&\multicolumn{4}{c|}{L-mesh}&\multicolumn{4}{c|}{C-mesh } \\ \cline{3-10}
&&\multicolumn{2}{c|}{Neumann}&\multicolumn{2}{c|}{Dirichlet }&\multicolumn{2}{c|}{Neumann}&\multicolumn{2}{c|}{Dirichlet }   \\ \cline{3-10}
 & &$ L^2 $ norm & order& $ L^2 $ norm & order&$ L^2 $ norm & order& $ L^2 $ norm & order\\
\hline
&10	&1.65E-01	&-		&9.22E+00	&-	 	&4.68E-02	&-		&6.47E-02	&-\\
&20	&1.37E+01	&-		&1.45E+07	&-		&1.83E-02	&1.36	&2.36E-02	&1.46\\
0.25&40 &2.06E+12	&-		&2.27E+33	&-		&8.48E-03	&1.11	&9.70E-03	&1.28\\
&80	&1.17E+59	&-		&1.31E+143	&-		&4.23E-03	&1.00	&4.52E-03	&1.10\\
&160&INF		&-		&Nan		&-		&2.12E-03	&0.99	&2.19E-03	&1.05\\
\hline
\end{tabular}

\caption{\label{table:exCFLresults}Example \ref{exCFL}: The results of CFL testing.}
\end{center}
\end{table}
For all the following numerical experiments, we use piecewise polynomials of degree $k=1,2.$ Moreover, we consider third-order SSP Runge-Kutta time discretization \cite{SSPRK} with $\Delta t = 0.01\Delta x^2$ to reduce the time error.
\begin{ex}\label{exNeumann} We solve the following heat equation in one space dimension
\begin{equation}
\begin{cases}
u_t=u_{xx},\:\:\:\:x\in[0,2\pi],\\
u(x,0)=\cos(x),
\end{cases}
\end{equation}
with Neumann boundary condition
\begin{equation}\label{exNeumannBC}
u_x(0,t)=u_x(2\pi,t)=0
\end{equation}
Clearly, the exact solution is
\[u(x,t)=e^{-t}\cos(x).\]
\end{ex}
We consider a uniform mesh and take $\xi_0=0$ in \eqref{pointondual}, i.e. the dual mesh is generated by using the midpoint of the primitive mesh. We take the final time $T=0.5$ and compute the $L^2$-norm of the error between the numerical and exact solutions. In Table \ref{table:Neumannuniformmidpoint}, the results show that we can only obtain suboptimal accuracy if $k$ is an odd number with the penalty parameter $\alpha=0$. One way to obtain the optimal accuracy is to choose $\alpha\neq 0$.

\begin{table}[H]
\begin{center}
\begin{tabular}{|c|c|cc|cc|cc|cc|}
\hline
\multirow{3}{*}k&\multirow{3}{*}{N}&\multicolumn{4}{c|}{L-mesh}&\multicolumn{4}{c|}{C-mesh } \\ \cline{3-10}
&&\multicolumn{2}{c|}{no penalty}&\multicolumn{2}{c|}{$\alpha=1.0$ }&\multicolumn{2}{c|}{no penalty}&\multicolumn{2}{c|}{$\alpha=1.0$ }   \\ \cline{3-10}
 & &$ L^2 $ norm & order& $ L^2 $ norm & order&$ L^2 $ norm & order& $ L^2 $ norm & order\\
\hline
&10	&9.51E-02	&-		&2.12E-02	&-	 	&3.91E-02	&-		&2.78E-02	&-	\\
&20	&4.66E-02	&1.03	&4.61E-03	&2.20	&1.8E-02	&1.08	&6.98E-03	&1.99\\
1&40&2.30E-02	&1.02	&1.08E-03	&2.09	&8.66E-03	&1.10	&1.64E-03	&2.10\\
&80	&1.14E-02	&1.01	&2.63E-04	&2.04	&4.24E-03	&1.03	&3.91E-04	&2.06\\
&160&5.67E-03	&1.00	&6.49E-05	&2.02	&2.13E-03	&0.99	&9.52E-05	&2.04\\
\hline
&10	&1.29E-03	&-		&9.37E-04	&-	 	&2.26E-03	&-		&1.87E-03	&-\\
&20	&1.60E-04	&3.01	&1.14E-04	&3.04	&3.56E-04	&2.67	&1.76E-04	&3.41\\
2&40&1.99E-05	&3.00	&1.41E-05	&3.01	&5.63E-05	&2.66	&2.00E-05	&3.14\\
&80	&2.49E-06	&3.00	&1.76E-06	&3.00	&9.26E-06	&2.60	&2.46E-06	&3.03\\
&160&3.12E-07	&3.00	&2.20E-07	&3.00	&1.57E-06	&2.56	&3.06E-07	&3.00\\
\hline
\end{tabular}

\caption{\label{table:Neumannuniformmidpoint}Example \ref{exNeumann}: midpoint with Neumann boundary condition.}
\end{center}
\end{table}

In addition, we also take $\alpha=0$ and $\xi_0=0.1$ which is closed to 0 and $\xi_0=\sqrt{3}/3$ which is away from 0. The results in Table  \ref{table:Neumannuniformnotmidpoint} demonstrated that we may recover the optimal convergence rates if we take $\xi_0\neq 0$. However, we may observe only the $\displaystyle k+\frac{1}{2}$ order of accuracy when we consider the C-mesh treatment without the penalty term.  

\begin{table}[H]
\begin{center}
\begin{tabular}{|c|c|cc|cc|cc|cc|}
\hline
\multirow{3}{*}k&\multirow{3}{*}{N}&\multicolumn{4}{c|}{L-mesh}&\multicolumn{4}{c|}{C-mesh} \\ \cline{3-10}
&&\multicolumn{2}{c|}{$\xi_0=0.1$}&\multicolumn{2}{c|}{$\xi_0=\sqrt{3}/3$ }&\multicolumn{2}{c|}{$\xi_0=0.1$}&\multicolumn{2}{c|}{$\xi_0=\sqrt{3}/3$ }   \\ \cline{3-10}
 & &$ L^2 $ norm & order& $ L^2 $ norm & order&$ L^2 $ norm & order& $ L^2 $ norm & order\\
\hline
&10	&4.18E-02	&-	 	&1.87E-02	&-	 	&4.09E-02	&-		&3.08E-02	&-	\\
&20	&9.24E-03	&2.18	&4.05E-03	&2.09	&1.71E-02	&1.26	&6.99E-03	&2.14\\
1&40&2.25E-03	&2.04	&1.05E-03	&2.07	&7.31E-03	&1.22	&1.62E-03	&2.11\\
&80	&5.65E-04	&1.99	&2.55E-04	&2.04	&2.21E-03	&1.72	&3.87E-04	&2.06\\
&160&1.42E-04	&1.99	&6.28E-05	&2.02	&5.66E-04	&1.98	&9.45E-05	&2.03\\
\hline
&10	&1.51E-03	&-		&1.29E-03	&-	 	&2.29E-03	&-		&2.76E-03	&-\\
&20	&1.79E-04	&3.07	&1.55E-04	&3.05	&6.59E-04	&2.68	&4.60E-04	&2.59\\
2&40&2.22E-05	&3.02	&1.93E-05	&3.01	&5.65E-05	&2.67	&7.73E-05	&2.57\\
&80	&2.76E-06	&3.00	&2.41E-06	&3.00	&9.26E-06	&2.61	&1.33E-06	&2.54\\
&160&3.45E-07	&3.00	&3.01E-07	&3.00	&1.57E-06	&2.56	&2.30E-06	&2.52\\
\hline
\end{tabular}

\caption{\label{table:Neumannuniformnotmidpoint}Example \ref{exNeumann}: $\xi_0=0.1$ and $\xi_0=\sqrt{3}/3$ with Neumann boundary condition.}
\end{center}
\end{table}

\begin{ex}\label{exDirichlet} We solve
\begin{equation}
\begin{cases}
u_t=u_{xx},\:\:\:\:x\in[0,2\pi],\\
u(x,0)=\sin(x),
\end{cases}
\end{equation}subject to Dirichlet boundary condition
\begin{equation}\label{exDirichletBC}
u(0,t)=u(2\pi,t)=0.
\end{equation}
Clearly, the exact solution is
\[u(x,t)=e^{-t}\sin(x).\]
\end{ex}
We also consider a uniform mesh and take $\xi_0=0$ in \eqref{pointondual}. Then, we choose $\alpha=0$, and take $\xi_0=0.1$ which is closed to 0 and $\xi_0=\sqrt{3}/3$ which is away from 0. According to Table \ref{table:Dirichletuniformmidpoint} and Table \ref{table:Dirichletuniformnotmidpoint}, we can observe the same outcomes discussed for the Neumann boundary condition in Example \ref{exNeumann}. However, we can obtain the optimal accuracy when we consider this case without the penalty term.

\begin{table}[H]
\begin{center}
\begin{tabular}{|c|c|cc|cc|cc|cc|}
\hline
\multirow{3}{*}k&\multirow{3}{*}{N}&\multicolumn{4}{c|}{L-mesh}&\multicolumn{4}{c|}{C-mesh } \\ \cline{3-10}
&&\multicolumn{2}{c|}{no penalty}&\multicolumn{2}{c|}{$\alpha=1.0$ }&\multicolumn{2}{c|}{no penalty}&\multicolumn{2}{c|}{$\alpha=1.0$ }   \\ \cline{3-10}
 & &$ L^2 $ norm & order& $ L^2 $ norm & order&$ L^2 $ norm & order& $ L^2 $ norm & order\\
\hline
&10	&7.19E-02	&-		&1.82E-02	&-	 	&5.04E-02	&-		&3.16E-02	&-\\
&20	&3.54E-02	&1.02	&4.26E-03	&2.10	&2.21E-02	&1.29	&6.45E-03	&2.30\\
1&40&1.76E-02	&1.00	&1.04E-03	&2.04	&9.51E-03	&1.22	&1.50E-03	&2.10\\
&80	&8.81E-03	&1.00	&2.57E-04	&2.01	&4.50E-03	&1.08	&3.70E-04	&2.02\\
&160&4.40E-03	&1.00	&6.42E-05	&2.00	&2.18E-03	&1.04	&9.23E-05	&2.00\\
\hline
&10	&1.32E-03	&-		&9.75E-04	&-	 	&1.96E-03	&-		&1.59E-03	&-\\
&20	&1.63E-04	&3.01	&1.16E-04	&3.08	&2.41E-04	&3.03	&2.03E-04	&2.97\\
2&40&2.02E-05	&3.01	&1.42E-05	&3.03	&2.99E-05	&3.01	&2.30E-05	&3.14\\
&80	&2.51E-06	&3.01	&1.76E-06	&3.01	&3.73E-06	&3.00	&2.68E-06	&3.10\\
&160&3.13E-07	&3.00	&2.20E-07	&3.00	&4.66E-07	&3.00	&3.20E-07	&3.06\\
\hline
\end{tabular}

\caption{\label{table:Dirichletuniformmidpoint}Example \ref{exDirichlet} midpoint with Dirichlet boundary condition.}
\end{center}
\end{table}

\begin{table}
\begin{center}
\begin{tabular}{|c|c|cc|cc|cc|cc|}
\hline
\multirow{3}{*}k&\multirow{3}{*}{N}&\multicolumn{4}{c|}{L-mesh}&\multicolumn{4}{c|}{C-mesh } \\ \cline{3-10}
&&\multicolumn{2}{c|}{$\xi_0=0.1$}&\multicolumn{2}{c|}{$\xi_0=\sqrt{3}/3$ }&\multicolumn{2}{c|}{$\xi_0=0.1$}&\multicolumn{2}{c|}{$\xi_0=\sqrt{3}/3$ }   \\ \cline{3-10}
 & &$ L^2 $ norm & order& $ L^2 $ norm & order&$ L^2 $ norm & order& $ L^2 $ norm & order\\
\hline
&10	&3.85E-02	&-	 &1.58E-02	&-	 	&5.34E-02	&-		&3.91E-02	&-\\
&20	&9.56E-03	&2.01&3.95E-03	&2.00	&2.21E-02	&1.27	&1.26E-02	&1.63\\
1&40&2.33E-03	&2.04&9.88E-04	&2.00	&8.97E-03	&1.30	&4.02E-03	&1.65\\
&80	&5.77E-04	&2.01&2.47E-04	&2.00	&2.73E-03	&1.71	&1.37E-03	&1.55\\
&160&1.44E-04	&2.00&6.18E-05	&2.00	&7.52E-04	&1.86	&4.74E-04	&1.53\\
\hline
&10	&1.61E-03	&-	 &1.36E-03	&-	 	&1.99E-03	&-		&2.30E-03	&-\\
&20	&1.87E-04	&3.11&1.61E-04	&3.08	&2.46E-04	&3.02	&2.61E-04	&3.14\\
2&40&2.27E-05	&3.04&1.96E-05	&3.03	&3.05E-05	&3.01	&3.07E-05	&3.09\\
&80	&2.80E-06	&3.02&2.43E-06	&3.01	&3.81E-06	&3.00	&3.71E-06	&3.04\\
&160&3.47E-07	&3.01&3.02E-07	&3.01	&4.76E-07	&3.00	&4.57E-07	&3.02\\
\hline
\end{tabular}

\caption{\label{table:Dirichletuniformnotmidpoint}Example \ref{exDirichlet} $\xi_0=0.1$ , and $\xi_0=\sqrt{3}/3$ with Dirichlet boundary condition.}
\end{center}
\end{table}

\begin{ex}\label{ex2DNeumann} We solve the following heat equation in two space dimension
\begin{equation}
\begin{cases}
u_t=u_{xx}+u_{yy},\:\:\:\:(x,y)\in[0,2\pi]\times[0,2\pi],\\
u(x,y,0)=\cos(x)\cos(y),
\end{cases}
\end{equation}
subject to Neumann boundary condition
\begin{equation}\label{ex2DNeumannBC}
u_x(0,y,t)=u_x(2\pi,y,t)=0\:\:\text{and}\:\:
u_y(x,0,t)=u_y(x,2\pi,t)=0.
\end{equation}
Clearly, the exact solution is
\[u(x,y,t)=e^{-2t}\cos(x)\cos(y).\]
\end{ex}
We also consider a uniform mesh, and take $\xi_0=0$ and $\eta_0=0$ to examine the dual meshes which were generated by the midpoint in both directions. We choose the final time to be $T=0.1$. Table \ref{table:2DNeumannuniformmidpoint} shows suboptimal accuracy if $k$ is an odd number with the penalty parameter $\alpha=0$. Also, one way to obtain the optimal accuracy is choosing $\alpha\neq 0$. However, when the C-mesh was used we can only obtain the $\displaystyle k+\frac{1}{2}$ order of accuracy.

\begin{table}[H]
\begin{center}
\begin{tabular}{|c|c|cc|cc|cc|cc|}
\hline
\multirow{3}{*}k&\multirow{3}{*}{N}&\multicolumn{4}{c|}{L-mesh}&\multicolumn{4}{c|}{C-mesh } \\ \cline{3-10}
&&\multicolumn{2}{c|}{no penalty}&\multicolumn{2}{c|}{$\alpha=1.0$ }&\multicolumn{2}{c|}{no penalty}&\multicolumn{2}{c|}{$\alpha=1.0$ }   \\ \cline{3-10}
 & &$ L^2 $ norm & order& $ L^2 $ norm & order&$ L^2 $ norm & order& $ L^2 $ norm & order\\
\hline
&16	&3.72E-01	&-		&1.34E-01	&-		&5.51E-01	&-		&5.52E-01	&-\\
1&64&1.36E-01	&1.45	&3.98E-02	&1.78	&1.92E-01	&1.52	&1.96E-01	&1.50\\
&256&5.51E-02	&1.30	&1.12E-02	&1.83	&6.99E-02	&1.45	&6.14E-02	&1.67\\
&1024&2.54E-02	&1.11	&2.81E-03	&2.00	&2.17E-02	&1.69	&2.00E-02	&1.62\\
\hline
&16	&2.06E-01	&-		&2.26E-01	&-		&7.85E-01	&-		&6.23E-01	&-\\
&64	&3.40E-02	&2.59	&4.51E-02	&2.34	&1.44E-01	&2.39	&1.12E-01	&2.47\\
2&256&5.20E-03	&2.71	&7.91E-03	&2.51	&2.59E-02	&2.48	&2.02E-02	&2.48\\
&1024&7.51E-04	&2.79	&1.14E-03	&2.79	&4.63E-03	&2.48	&3.65E-03	&2.47\\
&4096&9.52E-05	&2.97	&1.58E-04	&2.85	&8.21E-04	&2.49	&6.54E-04	&2.48\\
&16384&1.19E-05	&2.99	&1.98E-05	&2.99	&1.47E-04	&2.49	&1.18E-04	&2.48\\
\hline
\end{tabular}

\caption{\label{table:2DNeumannuniformmidpoint}Example \ref{ex2DNeumann}: midpoint with Neumann boundary condition.}
\end{center}
\end{table}

Moreover, we take $\alpha=0$, and consider the combination of $\xi_0=0, 0.5$  and  $\eta_0=0, 0.5$ The results in Table  \ref{table:2DNeumannuniformnotmidpoint} demonstrated that we may recover optimal convergence rates we if take $\xi_0\neq 0$ and $\eta_0\neq 0$. However, when we use the C-mesh, we can only obtain $\displaystyle k+\frac{1}{2}$ order of accuracy as shown in Table  \ref{table:2DNeumannuniformnotmidpointcombine}.

\begin{table}[H]
\begin{center}
\begin{tabular}{|c|c|cc|cc|cc|}
\hline
\multirow{3}{*}k&\multirow{3}{*}{N}&\multicolumn{6}{c|}{L-mesh}\\ \cline{3-8}
&&\multicolumn{2}{c|}{$\xi_0 = 0$, $\eta_0=0.5$ }&\multicolumn{2}{c|}{$\xi_0 = 0.5$, $\eta_0=0$  } &\multicolumn{2}{c|}{$\xi_0 = 0.5$, $\eta_0=0.5$ } \\ \cline{3-8}
 & &$ L^2 $ norm & order& $ L^2 $ norm & order& $ L^2 $ norm & order\\
\hline
&16		&3.48E-01	&-		&3.47E-01	&-		&3.22E-01	&-	\\
1&64	&1.15E-01	&1.59	&1.16E-01	&1.57  	&9.04E-02	&1.83	\\
&256	&4.22E-02	&1.44	&4.27E-02	&1.45	&2.28E-02	&1.99	\\
&1024	&1.84E-02	&1.19	&1.85E-02	&1.20	&5.55E-03	&2.04	\\
\hline
&16		&1.76E-01	&-		&1.78E-01	&-		&1.69E-01	&-	\\
&64		&3.76E-02	&2.66	&2.78E-02	&2.68	&2.92E-02	&2.53	\\
2&256	&3.74E-03	&2.88	&4.06E-03	&2.78	&4.63E-03	&2.66	\\
&1024	&5.03E-04	&2.90	&5.78E-04	&2.81	&5.93E-04	&2.96	\\
&4096	&6.35E-05	&2.98	&7.35E-05	&2.98	&7.59E-05	&2.97	\\
&16384	&7.99E-06	&2.99	&9.29E-06	&2.99	&9.52E-06	&2.99	\\
\hline
\end{tabular}

\caption{\label{table:2DNeumannuniformnotmidpoint}Example \ref{ex2DNeumann}: combination of $\xi_0=0, 0.5$  and  $\eta_0=0, 0.5$ with Neumann boundary condition on L-mesh.}
\end{center}
\end{table}

\begin{table}[H]
\begin{center}
\begin{tabular}{|c|c|cc|cc|cc|}
\hline
\multirow{3}{*}k&\multirow{3}{*}{N}&\multicolumn{6}{c|}{C-mesh}\\ \cline{3-8}
&&\multicolumn{2}{c|}{$\xi_0 = 0$, $\eta_0=0.5$ }&\multicolumn{2}{c|}{$\xi_0 = 0.5$, $\eta_0=0$  } &\multicolumn{2}{c|}{$\xi_0 = 0.5$, $\eta_0=0.5$ } \\ \cline{3-8}
 & &$ L^2 $ norm & order& $ L^2 $ norm & order& $ L^2 $ norm & order\\
\hline
&16		&5.54E-01	&-		&5.56E-01	&-		&5.57E-01	&-	\\
1&64	&1.94E-01	&1.51	&1.93E-01	&1.53  	&1.84E-02	&1.60	\\
&256	&6.13E-02	&1.66	&5.70E-02	&1.76	&5.97E-02	&1.62	\\
&1024	&1.99E-02	&1.62	&1.83E-02	&1.64	&1.89E-03	&1.66	\\
\hline
&16		&5.48E-01	&-		&4.85E-01	&-		&3.53E-01	&-	\\
&64		&8.88E-02	&2.62	&7.85E-02	&2.63	&5.42E-02	&2.70	\\
2&256	&1.49E-02	&2.57	&1.25E-02	&2.65	&8.57E-02	&2.66	\\
&1024	&2.59E-03	&2.53	&2.06E-03	&2.60	&1.46E-03	&2.56	\\
&4096	&4.48E-04	&2.53	&3.45E-04	&2.58	&2.58E-04	&2.50	\\
&16384	&8.02E-05	&2.48	&6.10E-05	&2.50	&4.57E-05	&2.50	\\
\hline
\end{tabular}

\caption{\label{table:2DNeumannuniformnotmidpointcombine}Example \ref{ex2DNeumann}: combination of $\xi_0=0, 0.5$  and  $\eta_0=0, 0.5$ with Neumann boundary condition on C-mesh.}
\end{center}
\end{table}

\begin{ex}\label{ex2DDirichlet} We solve
\begin{equation}
\begin{cases}
u_t=u_{xx}+u_{yy},\:\:\:\:(x,y)\in[0,2\pi]\times[0,2\pi],\\
u(x,y,0)=\sin(x)\sin(y),
\end{cases}
\end{equation}
subject to Dirichlet boundary condition
\begin{equation}\label{ex2DDirichletBC}
u(0,y,t)=u(2\pi,y,t)=0\:\:\text{and}\:\:
u(x,0,t)=u(x,2\pi,t)=0
\end{equation}
Clearly, the exact solution is
\[u(x,y,t)=e^{-2t}\sin(x)\sin(y).\]
\end{ex}
We also explore the problem on a uniform mesh and take $\xi_0=0$ and $\eta_0=0$ to examine the dual meshes that were generated by the midpoint in both directions. We take $\alpha=0$ and consider the combination of $\xi_0=0, 0.5$  and  $\eta_0=0, 0.5$. As stated in Table \ref{table:2DDirichletuniformmidpoint}, Table \ref{table:2DDirichletuniformnotmidpoint} and Table \ref{table:2DDirichletuniformnotmidpointcombine}, we can observe the same results discussed for problems with Neumann boundary condition in Example \ref{ex2DNeumann}.

\begin{table}
\begin{center}
\begin{tabular}{|c|c|cc|cc|cc|cc|}
\hline
\multirow{3}{*}k&\multirow{3}{*}{N}&\multicolumn{4}{c|}{L-mesh}&\multicolumn{4}{c|}{C-mesh } \\ \cline{3-10}
&&\multicolumn{2}{c|}{no penalty}&\multicolumn{2}{c|}{$\alpha=1.0$ }&\multicolumn{2}{c|}{no penalty}&\multicolumn{2}{c|}{$\alpha=1.0$ }   \\ \cline{3-10}
 & &$ L^2 $ norm & order& $ L^2 $ norm & order&$ L^2 $ norm & order& $ L^2 $ norm & order\\
\hline
&16&3.55E-01	&-		&8.83E-02	&-		&6.62E-01	&-		&6.63E-01	&-\\
1&64&1.08E-01	&1.73	&2.18E-02	&2.01	&18.2E-01	&1.86	&1.84E-01	&1.85\\
&256&4.33E-02	&1.30	&5.38E-03	&2.01	&4.93E-02	&1.86	&5.20E-02	&1.82\\
&1024&2.05E-02	&1.08	&1.33E-03	&2.01	&1.57E-02	&1.64	&1.45E-02	&1.85\\
\hline
&16&5.12E-02	&-		&1.69E-01	&-		&6.17E-01	&-		&5.29E-01	&-\\	
&64&9.81E-03	&2.39	&3.24E-02	&2.39	&1.27E-01	&2.28	&1.10E-01	&2.27\\
2&256&1.58E-03	&2.63	&5.37E-03	&2.59	&2.22E-02	&2.51	&1.97E-02	&2.48\\
&1024&2.25E-04	&2.81	&8.54E-04	&2.65	&3.82E-03	&2.54	&3.45E-03	&2.51\\
&4096&3.00E-05	&2.90	&1.07E-04	&2.99	&6.65E-04	&2.52	&6.07E-04	&2.51\\
&16384&3.75E-06	&3.00	&1.34E-05	&2.99	&1.17E-04	&2.51	&1.07E-04	&2.50\\
\hline
\end{tabular}

\caption{\label{table:2DDirichletuniformmidpoint}Example \ref{ex2DDirichlet}: midpoint with Dirichlet boundary condition.}
\end{center}
\end{table}

\begin{table}[H]
\begin{center}
\begin{tabular}{|c|c|cc|cc|cc|}
\hline
\multirow{3}{*}k&\multirow{3}{*}{N}&\multicolumn{6}{c|}{L-mesh}\\ \cline{3-8}
&&\multicolumn{2}{c|}{$\xi_0 = 0$, $\eta_0=0.5$ }&\multicolumn{2}{c|}{$\xi_0 = 0.5$, $\eta_0=0$  } &\multicolumn{2}{c|}{$\xi_0 = 0.5$, $\eta_0=0.5$ } \\ \cline{3-8}
 & &$ L^2 $ norm & order& $ L^2 $ norm & order& $ L^2 $ norm & order\\
\hline
&16		&3.51E-01	&-		&3.54E-01	&-		&3.47E-01	&-	\\
1&64	&9.62E-01	&1.86	&1.07E-01	&1.72  	&8.04E-02	&2.05	\\
&256	&3.40E-02	&1.50	&4.27E-02	&1.33	&2.09E-02	&2.01	\\
&1024	&1.49E-02	&1.18	&1.69E-02	&1.33	&5.23E-03	&2.00	\\
\hline
&16		&5.20E-02	&-		&5.20E-02	&-		&5.30E-02	&-	\\
&64		&9.66E-03	&2.42	&9.66E-03	&2.43	&9.54E-03	&2.47	\\
2&256	&1.54E-03	&2.64	&1.54E-03	&2.65	&1.50E-03	&2.67	\\
&1024	&2.12E-04	&2.85	&2.12E-04	&2.86	&1.99E-04	&2.91	\\
&4096	&2.79E-05	&2.93	&2.79E-05	&2.93	&2.56E-05	&2.96\\
&16384	&3.50E-06	&2.99	&3.49E-06	&3.00	&3.20E-06	&3.00	\\
\hline
\end{tabular}

\caption{\label{table:2DDirichletuniformnotmidpoint}Example \ref{ex2DDirichlet}: combination of $\xi_0=0, 0.5$  and  $\eta_0=0, 0.5$  with Dirichlet boundary condition on L-mesh.}
\end{center}
\end{table}
\begin{table}[H]
\begin{center}
\begin{tabular}{|c|c|cc|cc|cc|}
\hline
\multirow{3}{*}k&\multirow{3}{*}{N}&\multicolumn{6}{c|}{C-mesh}\\ \cline{3-8}
&&\multicolumn{2}{c|}{$\xi_0 = 0$, $\eta_0=0.5$ }&\multicolumn{2}{c|}{$\xi_0 = 0.5$, $\eta_0=0$  } &\multicolumn{2}{c|}{$\xi_0 = 0.5$, $\eta_0=0.5$ } \\ \cline{3-8}
 & &$ L^2 $ norm & order& $ L^2 $ norm & order& $ L^2 $ norm & order\\
\hline
&16		&6.62E-01	&-		&6.62E-01	&-		&6.62E-01	&-	\\
1&64	&1.81E-01	&1.86	&1.81E-01	&1.87  	&1.80E-02	&1.88	\\
&256	&4.91E-02	&1.88	&4.81E-02	&1.91	&4.77E-02	&1.92	\\
&1024	&1.59E-02	&1.62	&1.32E-02	&1.87	&1.28E-03	&1.91	\\
\hline
&16		&6.48E-01	&-		&5.81E-01	&-		&6.14E-01	&-	\\
&64		&1.31E-01	&2.30	&1.12E-01	&2.37	&1.17E-01	&2.39	\\
2&256	&2.34E-02	&2.50	&1.85E-02	&2.60	&1.96E-02	&2.58	\\
&1024	&4.07E-03	&2.53	&3.03E-03	&2.60	&3.24E-03	&2.60	\\
&4096	&7.10E-04	&2.52	&5.12E-04	&2.57	&5.48E-04	&2.56	\\
&16384	&1.25E-05	&2.51	&8.83E-05	&2.54	&9.45E-05	&2.54	\\
\hline
\end{tabular}

\caption{\label{table:2DDirichletuniformnotmidpointcombine}Example \ref{ex2DDirichlet}: combination of $\xi_0=0, 0.5$  and  $\eta_0=0, 0.5$  with Dirichlet boundary condition on C-mesh.}
\end{center}
\end{table}

\section{Conclusion}\label{sec:conclusion}
In this paper, we demonstrate the algorithms of the LDG method on overlapping mesh for Neumann and Dirichlet boundary conditions. The scheme for each boundary condition is stable with adjusted boundary cells but the order of accuracy may not be optimal. We observed that C-mesh could yield larger time step than L-mesh. However, in some cases, C-mesh cannot yield optimal convergence rates. With a positive penalty parameter, both meshes can result in optimal convergence rates.\\

\textbf{Acknowledgements}  This work is supported by NSF grant DMS-1818467 KMITL Research Fund, Research Seed Grant for New Lecturer.

\end{document}